\providecommand{\U}[1]{\protect\rule{.1in}{.1in}}
\newtheorem{theorem}{Theorem}[section]
\newtheorem{corollary}[theorem]{Corollary}
\newtheorem{definition}[theorem]{Definition}
\newtheorem{example}[theorem]{Example}
\newtheorem{lemma}[theorem]{Lemma}
\newtheorem{proposition}[theorem]{Proposition}
\newtheorem{remark}[theorem]{Remark}
\newenvironment{proof}[1][Proof]{\noindent\textbf{#1.} }{\ \rule{0.5em}{0.5em}}
\begin{document}

\title{Gelfand models for classical Weyl groups}
\author{Jos\'{e} O. Araujo and Tim Bratten\\Facultad de Ciencias Exactas\\Universidad Nacional del Centro de la Provincia de Buenos Aires\\Tandil, Argentina. }
\date{}
\maketitle

\begin{abstract}
In a recent preprint Kodiyalam and Verma give a particularly simple Gelfand
model for the symmetric group that is built naturally on the space of
involutions. In this manuscript we give a natural extension of Kodiyalam and
Verma's model to a Gelfand model for Weyl groups of type $B_{n}$ and
$D_{2n+1}$. Then we define an explicit isomorphism between this Gelfand model
and the polynomial model using a technique we call telescopic decomposition.

\end{abstract}

\section{Introduction}

\noindent A Gelfand model for a finite group is a complex representation that
decomposes into a multiplicity-free sum of all the irreducible complex
representations. The terminology was introduced in \cite{soto} and alludes to
seminal work by Bernstein, Gelfand and Gelfand \cite{bgg} where models for
connected compact Lie group were constructed using induced representations.
Continuing along these lines, Klyachko \cite{klyachko} constructed models for
general linear groups over finite fields using sums of induced
representations. A recent body of work constructs natural Gelfand models for
other kinds of groups, focusing in particular on the case of finite reflection groups.

Two types of Gelfand models have emerged in the literature. The first type is
\emph{an involution model}, inspired by Klyachko's work and studied, for
example, in \cite{bad}, \cite{caselli} and \cite{inglis}. The models for the
symmetric group in \cite{adin} and the generalized symmetric group in
\cite{marberg} are versions of this kind of model. In \cite{adin2}, a
combinatorial Gelfand model is constructed for both the symmetric group and
its Iwahori-Hecke algebra. A general result about the existence of involution
models for finite Coxeter groups is treated in \cite{vinroot}. For a finite
group it is known that the dimension of a Gelfand model is equal to the number
of involutions if and only if the irreducible representations can be realized
over the real numbers. Hence criteria for the existence of generalized
involution models are studied in \cite{marberg} to treat the case of complex
reflection groups. The existence of generalized involution models for wreath
products is studied in \cite{marberg2}.

A second type of model, \emph{the polynomial model}, was introduced in
\cite{aguado1} and used to construct a Gelfand model for the symmetric group.
This second type of model is associated to a finite subgroup of the complex
general linear group, and is shown to be a Gelfand model for reflection groups
of type $B_{n}$, $D_{2n+1}$, $I_{2}\left(  n\right)  $ and $G\left(
m,1,n\right)  $ in \cite{araujo1}, \cite{araujo2} and \cite{araujo3}. Garge
and Oesterl\'{e} \cite{garge} study the polynomial model in a more general
context and give a criteria for when it is a Gelfand model for a finite
Coxeter group.

In a recent preprint \cite{kod} Kodiyalam and Verma give a particularly simple
Gelfand model for the symmetric group that is built naturally on the space of
involutions. They raise the prospect of extending their model to other Weyl
groups and of finding an explicit relationship to the polynomial model. In the
second section of this manuscript we will give natural extensions of Kodiyalam
and Verma's model to representations for the Weyl groups of type $B_{n}$ and
type $D_{n}$. We prove these extensions are Gelfand models for $W\left(
B_{n}\right)  $ and for $W(D_{2n+1})$. In the third section, we will give an
explicit isomorphism between the Gelfand models constructed in the second
section and the polynomial model for these groups by using a technique we call
telescopic decomposition.

\section{A Gelfand model}

In this section we construct natural extensions of Kodiyalam and Verma's
Gelfand model for a Weyl group of type $A_{n-1}$ to representations for the
Weyl groups of type $B_{n}$ and $D_{n}$. We prove the representation for
$W(B_{n})$ is a Gelfand model and that the representation for $W(D_{n})$ is a
Gelfand model when $n$ is odd. In what follows, $W$ will denote a Weyl group
of type $A_{n-1}$, $B_{n}$ or $D_{n}$. realized in the following manner. Let
$\mathfrak{S}_{n}$ denote the permutation group for the set of indices
$\mathbb{I}_{n}=\left\{  1,2,\ldots,n\right\}  $. We introduce the group
$W\left(  B_{n}\right)  $ as the semidirect product
\[
W\left(  B_{n}\right)  =\mathcal{C}_{2}^{n}\rtimes\mathfrak{S}_{n}%
\]
where we think of $\mathcal{C}_{2}=\left\{  \pm1\right\}  $ as subgroup of
$\mathbb{C}^{\ast}.$ Through the manuscript, when convenient, we will identify
$\mathfrak{S}_{n}$ with the subgroup $W\left(  A_{n-1}\right)  =\left\{
\boldsymbol{I}\right\}  \times\mathfrak{S}_{n}$ of $W\left(  B_{n}\right)  ,$
where $\boldsymbol{I}\in\mathcal{C}_{2}^{n}$ denotes the identity (we will use
$\boldsymbol{I}$ to denote the identity in various contexts). The group
$W\left(  D_{n}\right)  $ consists of the elements $\tau=\left(  \zeta
,\pi\right)  =\left(  \left(  \zeta_{1},\ldots,\zeta_{n}\right)  ,\pi\right)
\in\mathcal{C}_{2}^{n}\rtimes\mathfrak{S}_{n}=W\left(  B_{n}\right)  $ that
satisfy
\[
\zeta_{1}\zeta_{2}\cdots\zeta_{n}=1.
\]
There are natural inclusions
\[
\mathfrak{S}_{n}\cong W\left(  A_{n-1}\right)  \subseteq W\left(
D_{n}\right)  \subseteq W\left(  B_{n}\right)  .
\]

An element $\tau=\left(  \zeta,\pi\right)  =\left(  \left(  \zeta_{1}%
,\ldots,\zeta_{n}\right)  ,\pi\right)  \in W$ determines a partition of
$\mathbb{I}_{n}$ into the \textquotedblleft linear\textquotedblright\ and
\textquotedblleft quadratic\textquotedblright\ indices:
\[%
\begin{array}
[c]{c}%
L_{1}^{\tau}=\left\{  i\in\mathbb{I}_{n}:\pi(i)=i\text{ and }\zeta
_{i}=-1\right\} \\
L_{2}^{\tau}=\left\{  i\in\mathbb{I}_{n}:\pi(i)\neq i\text{ and }\zeta
_{\pi(i)}=-1\right\} \\
Q_{1}^{\tau}=\left\{  i\in\mathbb{I}_{n}:\pi(i)=i\text{ and }\zeta
_{i}=1\right\} \\
Q_{2}^{\tau}=\left\{  i\in\mathbb{I}_{n}:\pi(i)\neq i\text{ and }\zeta
_{\pi(i)}=1\right\}  .
\end{array}
\]

\begin{remark}
If $\tau=\left(  \zeta,\pi\right)  $ is an involution in $W$, then $\pi$ is an
involution in $\mathfrak{S}_{n}$, so the cardinalities $\left\vert L_{2}%
^{\tau}\right\vert $ and $\left\vert Q_{2}^{\tau}\right\vert $ are both even
numbers, because the elements in $L_{2}^{\tau}$ and $Q_{2}^{\tau}$ are paired
by $\pi$.
\end{remark}

The following simple proposition will be useful. Given $\tau,\mu\in W$ we
define an equivalence relation $\tau\sim_{\mathfrak{S}_{n}}\mu$ when $\tau$
and $\mu$ are conjugate under the action of $\mathfrak{S}_{n}$.

\begin{proposition}
For $\tau,\mu\in W$, we have the relation $\tau\sim_{\mathfrak{S}_{n}}\mu$ if,
and only, if $\left\vert L_{j}^{\tau}\right\vert =\left\vert L_{j}^{\mu
}\right\vert $ and $\left\vert Q_{j}^{\tau}\right\vert =\left\vert Q_{j}^{\mu
}\right\vert $ for each $j=1,2$.
\end{proposition}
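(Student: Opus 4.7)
The plan is to combine two ingredients: the explicit formula for how $\mathfrak{S}_{n}$ acts by conjugation on the semidirect product $\mathcal{C}_2^n \rtimes \mathfrak{S}_n$, and (for the nontrivial direction) the structural constraint that an involution $\tau = (\zeta,\pi) \in W$ must satisfy. The computational tool is the identity
\[
\sigma\tau\sigma^{-1} \;=\; \bigl(\sigma\cdot\zeta,\; \sigma\pi\sigma^{-1}\bigr), \qquad (\sigma\cdot\zeta)_i = \zeta_{\sigma^{-1}(i)},
\]
valid for any $\sigma \in \mathfrak{S}_n$, $\tau = (\zeta,\pi) \in W$.

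For the forward implication, I would show that the four index sets transform equivariantly, namely $L_j^{\sigma\tau\sigma^{-1}} = \sigma(L_j^\tau)$ and $Q_j^{\sigma\tau\sigma^{-1}} = \sigma(Q_j^\tau)$ for $j = 1,2$. This is a short check against the definitions: $i$ is a fixed point of $\sigma\pi\sigma^{-1}$ iff $\sigma^{-1}(i)$ is a fixed point of $\pi$, and the sign condition transports along the same index translation. Since $\sigma$ is a bijection, all four cardinalities are invariant.

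For the converse, which (in view of the preceding Remark) I read as the statement for involutions, I would first exploit the involution constraint $\zeta\cdot\pi(\zeta) = \boldsymbol{I}$: it forces $\pi$ to be an involution in $\mathfrak{S}_n$ and $\zeta$ to be constant along each 2-cycle of $\pi$. Hence the cycle-and-sign data of $\tau$ is completely encoded by the quadruple $\bigl(|L_1^\tau|,\,|Q_1^\tau|,\,|L_2^\tau|/2,\,|Q_2^\tau|/2\bigr)$, counting $-1$-fixed points, $+1$-fixed points, $-1$-twin 2-cycles and $+1$-twin 2-cycles respectively. Given matching cardinalities for $\tau$ and $\mu = (\zeta',\pi')$, I would build $\sigma \in \mathfrak{S}_n$ piecewise by choosing any bijections $L_1^\tau \to L_1^\mu$ and $Q_1^\tau \to Q_1^\mu$, together with a bijection between the 2-cycles of $\pi$ inside $L_2^\tau$ and the 2-cycles of $\pi'$ inside $L_2^\mu$ (which extends to endpoints), and analogously for $Q_2$. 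A direct substitution into the conjugation formula then yields $\sigma\tau\sigma^{-1} = \mu$.

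The one piece of care required is the bookkeeping on 2-cycles: a 2-cycle $\{i,\pi(i)\}$ of $\pi$ must be sent to an entire 2-cycle $\{\sigma(i),\pi'(\sigma(i))\}$ of $\pi'$, so the bijection is chosen at the level of cycles rather than elements. The case $W = W(D_n)$ requires no separate argument, since $\mathfrak{S}_n \subseteq W(D_n)$ and $\mathfrak{S}_n$-conjugation preserves the constraint $\zeta_1\cdots\zeta_n = 1$, so the same $\sigma$ produced above lies in the correct conjugating subgroup.
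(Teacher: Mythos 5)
Your proof is correct and follows essentially the same route as the paper's: the forward direction via the equivariance $L_{j}^{\sigma\tau\sigma^{-1}}=\sigma(L_{j}^{\tau})$, $Q_{j}^{\sigma\tau\sigma^{-1}}=\sigma(Q_{j}^{\tau})$, and the converse by assembling $\sigma$ piecewise from bijections of fixed points and of $2$-cycles. The paper dismisses the converse with ``it is clear''; your version is more careful, in particular in restricting the converse to involutions (where the four cardinalities genuinely determine the $\mathfrak{S}_{n}$-conjugacy class, whereas for general elements of $W$ the statement would fail since the sets do not record cycle type), which is the only case the paper ever uses.
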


\begin{proof}
Suppose $\pi\in\mathfrak{S}_{n}$ and that $\tau=\pi\mu\pi^{-1}$. Then it
follows that $L_{j}^{\tau}=\pi\left(  L_{j}^{\mu}\right)  $ and $Q_{j}^{\tau
}=\pi\left(  Q_{j}^{\mu}\right)  $ for $j=1,2$. Conversely, if the
cardinalities of the given sets are the same, then it is clear there is a
permutation $\pi\in\mathfrak{S}_{n}$ such that $\tau=\pi\mu\pi^{-1}$.
\end{proof}

\bigskip

We introduce the polynomial algebra $\mathcal{P}=\mathbb{C}\left[
x_{1},\ldots,x_{n}\right]  $ and decompose it into homogeneous components
\[
\mathcal{P=\oplus}_{k\geq0}\mathcal{P}_{k}\text{.}%
\]
A linear action of $W$ on the homogeneous component $\mathcal{P}_{1}$ of
$\mathcal{P}$ is defined by
\[
\left(  \zeta,\pi\right)  \cdot x_{i}=\zeta_{\pi\left(  i\right)  }%
x_{\pi\left(  i\right)  }%
\]
for $i\in\mathbb{I}_{n}$. The action on $\mathcal{P}_{1}$ extends canonically
to a representation of $W$ on $\mathcal{P}$ by automorphisms. There is a
corresponding representation of $W$ on the vector space
\[
V=\mathcal{P}\oplus\left(  \mathcal{P}\wedge\mathcal{P}\right)
\]
and on the symmetric algebra $S(V)$. In what follows we construct a
representation of $W$ on an invariant subspace $\mathcal{M}\subseteq$ $S(V)$.

Let $\mathcal{I}$ denote the set of involutions in $W$. To each $\tau=\left(
\zeta,\pi\right)  \in\mathcal{I}$ we associate the element $e_{\tau}\in S(V)$
defined by%
\[
e_{\tau}=\left(
{\displaystyle\prod\limits_{k\in L_{1}^{\tau}}}
x_{k}\right)  \left(
{\displaystyle\prod\limits_{\substack{m\in L_{2}^{\tau}\\m<\pi\left(
m\right)  }}}
\left(  x_{m}\wedge x_{\pi\left(  m\right)  }\right)  \right)  \left(
{\displaystyle\prod\limits_{\substack{l\in Q_{2}^{\tau}\\l<\pi\left(
l\right)  }}}
\left(  x_{l}^{2}\wedge x_{\pi\left(  l\right)  }^{2}\right)  \right)
\text{.}%
\]

The vectors $e_{\tau}$ transform nicely under the action of elements from $W$.
In particular, if $\omega=\left(  \varepsilon,\eta\right)  \in W$ then
\[
\omega e_{\tau}=\pm\left(
{\displaystyle\prod\limits_{k\in L_{1}^{\tau}}}
x_{\eta^{-1}\left(  k\right)  }\right)  \left(
{\displaystyle\prod\limits_{\substack{m\in L_{2}^{\tau}\\m<\eta\left(
m\right)  }}}
\left(  x_{\eta^{-1}\left(  m\right)  }\wedge x_{\eta^{-1}\pi\left(  m\right)
}\right)  \right)  \left(
{\displaystyle\prod\limits_{\substack{l\in Q_{2}^{\tau}\\l<\eta\left(
l\right)  }}}
\left(  x_{\eta^{-1}\left(  l\right)  }^{2}\wedge x_{\eta^{-1}\pi\left(
l\right)  }^{2}\right)  \right)
\]%
\[
=\pm e_{\eta\tau\eta^{-1}}.
\]
Let $\mathcal{M}$ be the subspace of $S(V)$ generated by the elements
$e_{\tau}$ with $\tau\in\mathcal{I}$. Thus $\mathcal{M}$ is a $W$-invariant
subspace of $S(V)$. For $\tau\in\mathcal{I}$, we let $\mathcal{M}_{\tau
}\subseteq\mathcal{M}$ be the $W$-submodule of $\mathcal{M}$ generated by the
$W$-orbit of $\tau$.

\begin{proposition}
Let $\mathcal{M}$ be the previously defined $W$-module

i) dim$_{\mathbb{C}}\left(  \mathcal{M}\right)  =\left\vert \mathcal{I}%
\right\vert $.

ii) $\mathcal{M}_{\tau}$ is the span of the vectors $\left\{  \pi e_{\tau}%
:\pi\in\mathfrak{S}_{n}\right\}  $.

iii) $\mathcal{M}_{\tau}=\mathcal{M}_{\mu}$ if and only if $\tau$ and $\mu$
are conjugate under the action of $\mathfrak{S}_{n}$.

iv) If $\mathfrak{R}$ is a system of representatives of the $\mathfrak{S}_{n}%
$-orbits in $\mathcal{I\,}$, then $\mathcal{M}=\oplus_{\rho\in\mathfrak{R}%
}\mathcal{M}_{\rho}$.\medskip
\end{proposition}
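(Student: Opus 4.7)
The plan is to deduce (i) by exhibiting the $e_{\tau}$ as distinct elements of a natural monomial basis of $S(V)$; assertions (ii)--(iv) will then follow easily from (i) together with the transformation formula $\omega e_{\tau}=\pm e_{\eta\tau\eta^{-1}}$ displayed just before the statement. The only genuine content is (i); the remaining items are bookkeeping on top of that identification and the transformation formula.

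For (i), equip $\mathcal{P}$ with its monomial basis and $\mathcal{P}\wedge\mathcal{P}$ with the induced basis of ordered wedges $p\wedge q$ of distinct monomials. This produces a basis $\mathcal{B}$ of $V=\mathcal{P}\oplus(\mathcal{P}\wedge\mathcal{P})$, and $S(V)$ inherits a basis of unordered products of elements of $\mathcal{B}$. By construction every factor of $e_{\tau}$ lies in $\mathcal{B}$: the linear factors $x_{k}$ for $k\in L_{1}^{\tau}$, the linear wedges $x_{m}\wedge x_{\pi(m)}$ with $m<\pi(m)$ for the $2$-cycles inside $L_{2}^{\tau}$, and the quadratic wedges $x_{l}^{2}\wedge x_{\pi(l)}^{2}$ with $l<\pi(l)$ for the $2$-cycles inside $Q_{2}^{\tau}$ are pairwise distinct basis vectors of $V$; hence $e_{\tau}$ is itself a single basis monomial of $S(V)$. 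Moreover $\tau\mapsto e_{\tau}$ is injective, since from $e_{\tau}$ one reads off $L_{1}^{\tau}$ (the singleton factors), the $2$-cycles lying in $L_{2}^{\tau}$ (the linear wedges), the $2$-cycles lying in $Q_{2}^{\tau}$ (the quadratic wedges), while $Q_{1}^{\tau}$ is the complement in $\mathbb{I}_{n}$. Thus $\{e_{\tau}:\tau\in\mathcal{I}\}$ is a subset of a basis of $S(V)$ in bijection with $\mathcal{I}$, yielding $\dim_{\mathbb{C}}\mathcal{M}=|\mathcal{I}|$.

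For (ii), the displayed formula gives $\omega e_{\tau}=\pm e_{\eta\tau\eta^{-1}}$, a vector depending only on the $\mathfrak{S}_{n}$-component $\eta$ of $\omega=(\varepsilon,\eta)$; hence the $W$-span of $e_{\tau}$ and the $\mathfrak{S}_{n}$-span of $e_{\tau}$ coincide, which is exactly the content of (ii). For (iii), one direction is immediate: if $\mu=\pi\tau\pi^{-1}$, then $e_{\mu}=\pm\pi e_{\tau}\in\mathcal{M}_{\tau}$, and by symmetry $\mathcal{M}_{\tau}=\mathcal{M}_{\mu}$. Conversely, $\mathcal{M}_{\tau}=\mathcal{M}_{\mu}$ forces $e_{\mu}$ to lie in the span of $\{e_{\pi\tau\pi^{-1}}:\pi\in\mathfrak{S}_{n}\}$, but since $e_{\mu}$ and each $e_{\pi\tau\pi^{-1}}$ is a basis vector of $S(V)$, the linear independence from (i) upgrades this to $e_{\mu}=\pm e_{\pi\tau\pi^{-1}}$ for some single $\pi$, whence $\mu=\pi\tau\pi^{-1}$ by injectivity of $\tau\mapsto e_{\tau}$. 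Finally (iv) is a direct corollary: writing $\mathcal{I}$ as the disjoint union of its $\mathfrak{S}_{n}$-orbits, (ii) gives $\mathcal{M}=\sum_{\rho\in\mathfrak{R}}\mathcal{M}_{\rho}$, and the sum is direct because the generating sets $\{e_{\tau}:\tau\in\mathfrak{S}_{n}\cdot\rho\}$ for distinct $\rho$ are disjoint subsets of the basis exhibited in (i).

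The main (and essentially only) obstacle is carrying out (i) cleanly enough to rule out coincidences among the $e_{\tau}$. The conventions $m<\pi(m)$ and $l<\pi(l)$ in the definition of $e_{\tau}$ are precisely what guarantee that each factor is a canonical basis vector of $V$ rather than merely a vector up to sign, and this is what makes the identification of $\{e_{\tau}\}$ with a subset of the natural monomial basis of $S(V)$ unambiguous; once that is in hand, (ii)--(iv) are formal.
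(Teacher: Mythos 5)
Your proof is correct and takes essentially the same route as the paper's: the paper likewise observes that the factors $x_{i}$, $x_{i}\wedge x_{j}$ and $x_{i}^{2}\wedge x_{j}^{2}$ (for $i<j$) are linearly independent in $V$, so that the $e_{\tau}$ are distinct basis monomials of $S(V)$, and then derives (ii)--(iv) from the transformation formula. You have simply written out in full the injectivity of $\tau\mapsto e_{\tau}$ and the bookkeeping that the paper leaves implicit.
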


\begin{proof}
\emph{i) }For $i<j$ the factors $x_{i}$, $x_{i}\wedge x_{j}$ and $x_{i}%
^{2}\wedge x_{j}^{2}$ are linearly independent in $V$, thus it follows that
the elements $e_{\tau}$ with $\tau\in\mathcal{I}$ form a basis of
$\mathcal{M}$.

\emph{ii)} and \emph{iii) }follow from the way elements in $W$ transform the
basis vectors $e_{\tau}$ and $e_{\mu}$.

\emph{iv) }follows directly from the previous points. \emph{ }
\end{proof}

\bigskip

It is known that the dimension of a Gelfand model for a Weyl group coincides
with the number of involutions in the group. Thus it follows from the previous
proposition that $\mathcal{M}$ is a Gelfand model for $W$ if and only the
module is multiplicity-free. In order to prove this, in what follows, we will
show End$_{W}\left(  \mathcal{M},\mathcal{M}\right)  $ is a commutative ring
except for $W=W(D_{2n})$.

\begin{lemma}
\label{1}Given $\tau,\mu\in\mathcal{I}$ such that $\mathbb{I}_{n}=L_{1}^{\tau
}\cup L_{2}^{\tau}=L_{1}^{\mu}\cup L_{2}^{\mu}$ or $\mathbb{I}_{n}=Q_{1}%
^{\tau}\cup Q_{2}^{\tau}=Q_{1}^{\mu}\cup Q_{2}^{\mu}$, then there exists an
involution $\sigma\in\mathfrak{S}_{n}$ such that $\sigma e_{\tau}=\pm e_{\tau
}$ and $\sigma e_{\mu}=\mp e_{\mu}$ or else there exists an involution
$\sigma\in\mathfrak{S}_{n}$ such that $\sigma e_{\tau}=\pm e_{\mu}$.\medskip
\end{lemma}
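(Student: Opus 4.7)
The plan is to split by whether $\tau\sim_{\mathfrak{S}_n}\mu$ (equivalently, by Proposition 2.2, by whether the cardinalities $|L_j^{\bullet}|,|Q_j^{\bullet}|$ agree). The hypothesis forces $\zeta_\tau=\zeta_\mu$ to both be $-\boldsymbol{I}$ (the L case) or both $+\boldsymbol{I}$ (the Q case), so $\tau,\mu$ are determined by the involutions $\pi_\tau,\pi_\mu\in\mathfrak{S}_n$. The two cases are structurally parallel: in the L case $e_\tau$ is a product of linear factors $x_k$ and wedges $x_m\wedge x_{\pi(m)}$, while in the Q case only the wedges $x_l^2\wedge x_{\pi(l)}^2$ appear. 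Since the sign behaviour of a transposition on $x_m\wedge x_{\pi(m)}$ and on $x_l^2\wedge x_{\pi(l)}^2$ is identical, I would prove the L case in detail, the Q case going through verbatim.

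The key combinatorial device is the \emph{overlay graph} $G$ on vertex set $\mathbb{I}_n$, with red edges along the 2-cycles of $\pi_\tau$ and blue edges along the 2-cycles of $\pi_\mu$. Each vertex carries at most one edge of each colour, so $G$ is a disjoint union of alternating paths, even-length alternating cycles, and isolated vertices. \emph{If $\pi_\tau$ and $\pi_\mu$ have the same cycle type}, no component can have both endpoints of the same colour (else the internal red and blue edge counts would differ), so every component is balanced. Assembling the required involution per component --- reflection $\sigma(v_i)=v_{2k-i}$ on an even-length path $v_0,\dots,v_{2k}$, and a vertex-through reflection on an alternating cycle --- produces $\sigma\in\mathfrak{S}_n$ with $\sigma\pi_\tau\sigma^{-1}=\pi_\mu$; by the transformation rule $\omega e_\tau=\pm e_{\omega\tau\omega^{-1}}$, this gives $\sigma e_\tau=\pm e_\mu$, the second alternative of the lemma.

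\emph{If the cycle types differ}, a count of endpoints shows $|L_1^\mu|-|L_1^\tau|=2(\#\mathrm{RR}-\#\mathrm{BB})$, where an RR-path (resp.\ BB-path) is an odd-length alternating path with both endpoints red-only (resp.\ blue-only). Hence some unbalanced path exists; say an RR-path $v_0,\dots,v_{2k+1}$. The reflection $\sigma(v_i)=v_{2k+1-i}$ on this path, extended by the identity elsewhere, is an involution; it centralises both $\pi_\tau$ and $\pi_\mu$ because along an odd-length path the reflection preserves edge parity, hence edge colour. The main obstacle is the sign bookkeeping, for which I would verify two local facts: swapping two disjoint wedges contributes sign $+1$ (the two individual sign reversals cancel after commuting the factors in $S(V)$), whereas reversing a single wedge contributes $-1$. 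On the RR-path the $k+1$ red 2-cycles pair off except that when $k$ is even one red 2-cycle is fixed and reversed, while the $k$ blue 2-cycles pair off except that when $k$ is odd one blue 2-cycle is fixed and reversed. In either parity exactly one of $\sigma e_\tau,\sigma e_\mu$ acquires a relative sign $-1$, giving the first alternative. The Q case is parallel, using that $Q_1^{\bullet}$-endpoints contribute nothing to $e_\tau,e_\mu$ and so drop out of the accounting without affecting the parity argument.
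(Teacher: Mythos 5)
Your overlay graph is the same combinatorial object as the paper's orbits of the group generated by $\pi_{\tau}$ and $\pi_{\mu}$, your RR/BB paths are exactly its ``polar orbits,'' and your reflection-plus-sign computation in the unbalanced case is correct and matches the paper's. The gap is in the other branch: the claim that if $\pi_{\tau}$ and $\pi_{\mu}$ have the same cycle type then no component has both endpoints of the same colour is false. Equality of cycle types forces only the \emph{global} identity $\#\mathrm{RR}=\#\mathrm{BB}$ (your own endpoint count shows precisely this), not the vanishing of each term; the parenthetical justification argues inside a single component while the hypothesis is a statement about the whole of $\mathbb{I}_{n}$. Concretely, take $n=4$, $\zeta_{\tau}=\zeta_{\mu}=-\boldsymbol{I}$, $\pi_{\tau}=(12)$, $\pi_{\mu}=(34)$: the cycle types agree, yet the overlay graph is one RR-path $\left\{ 1,2\right\}$ and one BB-path $\left\{ 3,4\right\}$, so ``every component is balanced'' fails and your per-component recipe (which only covers balanced paths and alternating cycles) produces no $\sigma$ at all.

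The error is repairable, and the repair lands you exactly on the paper's proof: the right dichotomy is not ``same versus different cycle type'' but ``does an unbalanced (even-vertex) path exist or not.'' If one exists, run your reflection argument on it to get the first alternative --- this works whether or not the cycle types agree, since it never uses the count $\#\mathrm{RR}-\#\mathrm{BB}$. If none exists, every component really is balanced and your componentwise conjugation gives $\sigma\pi_{\tau}\sigma^{-1}=\pi_{\mu}$, hence $\sigma e_{\tau}=\pm e_{\mu}$. (One could instead salvage your same-cycle-type branch by pairing each RR-path with a BB-path and building an involution carrying one to the other, but that is extra work the reflection already does.) Everything else --- the reduction to $\zeta=\pm\boldsymbol{I}$, the parity analysis of how the reflection acts on edges of an odd- versus even-length path, the local sign facts for swapped versus reversed wedge factors, and the treatment of the $L_{1}$/$Q_{1}$ endpoints --- checks out.
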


\begin{proof}
Letting $\tau=\left(  \zeta,\pi\right)  $ and $\mu=\left(  \xi,\omega\right)
$ we will work on the involutions $\pi$ and $\omega$ in $\mathfrak{S}_{n}$.
Suppose that $\mathbb{I}_{n}=L_{1}^{\tau}\cup L_{2}^{\tau}=L_{1}^{\mu}\cup
L_{2}^{\mu}$ (this amounts to the condition that $\zeta=\xi=-\boldsymbol{I}$).
Let $G$ be the subgroup of $\mathfrak{S}_{n}$ generated by $\pi$ and $\omega$.
We consider the $G$-orbits on $\mathbb{I}_{n}$, characterizing them in the
following way. A $G$-orbit $S\subseteq\mathbb{I}_{n}$ is called \emph{trivial}
if $S\subseteq L_{1}^{\mu}\cap L_{1}^{\tau}$, $S$ is called\emph{ cyclic }if
$S\subseteq L_{2}^{\mu}\cap L_{2}^{\tau}$ and $S$ is called \emph{linear }if
$L_{2}^{\mu}\cap L_{1}^{\tau}\cap S\neq\emptyset$ or $L_{1}^{\mu}\cap
L_{2}^{\tau}\cap S\neq\emptyset$. We define \emph{a polar orbit }to be a
linear orbit with an even number of elements.

Suppose there exists a nonempty polar orbit $S$ and suppose $i\in L_{2}^{\mu
}\cap L_{1}^{\tau}\cap S$. Starting with $i$, we build a sequence by applying
first $\omega$ and then $\pi$ alternately. Let $k$ be the greatest natural
number such that this sequence is injective. We have%

\[
i=i_{1}\overset{\omega}{\rightarrow}i_{2}\overset{\pi}{\rightarrow}%
i_{3}\overset{\omega}{\rightarrow}i_{4}\overset{\pi}{\rightarrow}%
\cdots\overset{\omega}{\rightarrow}i_{k}\text{.}%
\]
The value of $\pi\left(  i_{k}\right)  $ must match one of the elements
$i_{1},i_{2},\ldots,i_{k}$. From the above table it follows that the product
of transpositions
\[
\left(  i_{2},i_{3}\right)  \left(  i_{4},i_{5}\right)  \cdots\left(
i_{k-2},i_{k-1}\right)
\]
is part of the cyclic decomposition of $\pi$. Therefore $\pi\left(
i_{k}\right)  =i_{k}$. We define involutions $\pi_{1}$, $\omega_{1}$ and
$\sigma$ in the following way:%
\[%
\begin{array}
[c]{c}%
\pi_{1}=\left(  i_{1}\right)  \left(  i_{2},i_{3}\right)  \left(  i_{4}%
,i_{5}\right)  \cdots\left(  i_{k-2},i_{k-1}\right)  \left(  i_{k}\right) \\
\omega_{1}=\left(  i_{1},i_{2}\right)  \left(  i_{3},i_{4}\right)
\cdots\left(  i_{k-1},i_{k}\right) \\
\sigma=\left(  i_{1},i_{k}\right)  \left(  i_{2},i_{k-1}\right)  \cdots\left(
i_{l},i_{l+1}\right)  \text{. }%
\end{array}
\]
Thus $\pi_{1}$ and $\omega_{1}$ are part of the cyclic decompositions of $\pi$
and $\omega$ and these involutions verify the conditions $\sigma\pi_{1}%
\sigma^{-1}=\pi_{1}$, $\sigma\omega_{1}\sigma^{-1}=\omega_{1}$. We have
\[
\sigma\left(  x_{i_{1}}\left(  x_{i_{2}}\wedge x_{i_{3}}\right)  \left(
x_{i_{4}}\wedge x_{i_{5}}\right)  \cdots\left(  x_{i_{k-2}}\wedge x_{i_{k-1}%
}\right)  x_{i_{k}}\right)  =
\]%
\[
\left(  -1\right)  ^{l-1}x_{i_{1}}\left(  x_{i_{2}}\wedge x_{i_{3}}\right)
\left(  x_{i_{4}}\wedge x_{i_{5}}\right)  \cdots\left(  x_{i_{k-2}}\wedge
x_{i_{k-1}}\right)  x_{i_{k}}\text{ \ \ and}%
\]%
\[
\sigma\left(  \left(  x_{i_{1}}\wedge x_{i_{2}}\right)  \left(  x_{i_{3}%
}\wedge x_{i_{4}}\right)  \cdots\left(  x_{i_{k-1}}\wedge x_{i_{k}}\right)
\right)  =\left(  -1\right)  ^{l}\left(  x_{i_{1}}\wedge x_{i_{2}}\right)
\left(  x_{i_{3}}\wedge x_{i_{4}}\right)  \cdots\left(  x_{i_{k-1}}\wedge
x_{i_{k}}\right)  \text{.}%
\]
Therefore if we define $\sigma$ to be the identity on the compliment of $S$ in
$\mathbb{I}_{n}$ it follows that $\sigma e_{\tau}=\pm e_{\tau}$ and $\sigma
e_{\omega}=\mp e_{\omega}$.

Suppose $G$ has no polar orbits. Let $S$ be a linear orbit with an odd number
of elements and suppose $i\in L_{2}^{\mu}\cap L_{1}^{\tau}\cap S$. As before
let $k$ be the greatest natural number such that this sequence
\[
i=i_{1}\overset{\omega}{\rightarrow}i_{2}\overset{\pi}{\rightarrow}%
i_{3}\overset{\omega}{\rightarrow}i_{4}\overset{\pi}{\rightarrow}%
\cdots\overset{\pi}{\rightarrow}i_{k}%
\]
is injective. Arguing as before, we see that $\pi(i_{k})=i_{k}$. Let $\pi_{1}%
$, $\omega_{1}$ and $\sigma$ be the involutions given by%
\[%
\begin{array}
[c]{c}%
\pi_{1}=\left(  i_{1}\right)  \left(  i_{2},i_{3}\right)  \left(  i_{4}%
,i_{5}\right)  \cdots\left(  i_{k-1},i_{k}\right) \\
\omega_{1}=\left(  i_{1},i_{2}\right)  \left(  i_{3},i_{4}\right)
\cdots\left(  i_{k-2},i_{k-1}\right)  \left(  i_{k}\right) \\
\sigma=\left(  i_{1},i_{k}\right)  \left(  i_{2},i_{k-1}\right)  \cdots\left(
i_{l},i_{l+2}\right)  \text{.}%
\end{array}
\]
Thus $\pi_{1}$ and $\omega_{1}$ are part of the cyclic decompositions of $\pi$
and $\omega$ respectively and satisfy the condition $\sigma\pi_{1}\sigma
^{-1}=\omega_{1}.$ Hence
\[
\sigma\left(  x_{i_{1}}\left(  x_{i_{2}}\wedge x_{i_{3}}\right)  \left(
x_{i_{4}}\wedge x_{i_{5}}\right)  \cdots\left(  x_{i_{k-1}}\wedge x_{i_{k}%
}\right)  \right)  =\left(  -1\right)  ^{l}\left(  x_{i_{1}}\wedge x_{i_{2}%
}\right)  \left(  x_{i_{3}}\wedge x_{i_{4}}\right)  \cdots\left(  x_{i_{k-2}%
}\wedge x_{i_{k-1}}\right)  x_{i_{k}}%
\]
so that $\sigma e_{\tau}=\pm e_{\omega}$.

Suppose now $S\subseteq L_{2}^{\mu}\cap L_{2}^{\tau}$ is a cyclic orbit. Let
$i\in S$. As before, we construct a sequence $i_{1},i_{2},\ldots,i_{k}$. \ In
this case, $k=2l$ is necessarily an even number, since from a table of type%
\[
i=i_{1}\overset{\omega}{\rightarrow}i_{2}\overset{\pi}{\rightarrow}%
i_{3}\overset{\omega}{\rightarrow}i_{4}\overset{\pi}{\rightarrow}%
\cdots\overset{\pi}{\rightarrow}i_{k}%
\]
it follows that $\omega\left(  i_{k}\right)  =i_{k}$. Since $i_{k}\in
L_{2}^{\tau}\cap L_{2}^{\mu}$ this is impossible. Therefore the table must be
of the type%
\[
i_{1}\overset{\omega}{\rightarrow}i_{2}\overset{\pi}{\rightarrow}i_{3}%
\overset{\omega}{\rightarrow}i_{4}\overset{\pi}{\rightarrow}\cdots
\overset{\omega}{\rightarrow}i_{k}%
\]
and $\pi\left(  i_{k}\right)  =i_{1}$. As in previous cases, we have the
involutions $\pi_{1}$, $\omega_{1}$ and $\sigma$ given by%
\[%
\begin{array}
[c]{c}%
\pi_{1}=\left(  i_{2},i_{3}\right)  \left(  i_{4},i_{5}\right)  \cdots\left(
i_{k-2},i_{k-1}\right)  \left(  i_{k},i_{1}\right) \\
\omega_{1}=\left(  i_{1},i_{2}\right)  \left(  i_{3},i_{4}\right)
\cdots\left(  i_{k-1},i_{k}\right) \\
\sigma=\left(  i_{2},i_{k}\right)  \left(  i_{3},i_{k-1}\right)  \cdots\left(
i_{l},i_{l+2}\right)
\end{array}
\]
which verify $\sigma\pi_{1}\sigma^{-1}=\omega_{1}$. Hence
\[
\sigma\left(  \left(  x_{i_{2}}\wedge x_{i_{3}}\right)  \left(  x_{i_{4}%
}\wedge x_{i_{5}}\right)  \cdots\left(  x_{i_{k-2}}\wedge x_{i_{k-1}}\right)
\left(  x_{i_{1}}\wedge x_{i_{k}}\right)  \right)  =
\]%
\[
\left(  -1\right)  ^{l}\left(  x_{i_{1}}\wedge x_{i_{2}}\right)  \left(
x_{i_{3}}\wedge x_{i_{4}}\right)  \cdots\left(  x_{i_{k-1}}\wedge x_{i_{k}%
}\right)  .
\]
Defining $\sigma$ to be the identity on the trivial orbits it follows that
$\sigma e_{\tau}=\pm e_{\omega\text{.}}$.

The proof for the case $\mathbb{I}_{n}=Q_{1}^{\tau}\cup Q_{2}^{\tau}%
=Q_{1}^{\mu}\cup Q_{2}^{\mu}$ is similar.
\end{proof}

\begin{remark}
When $W$ is the symmetric group $\mathfrak{S}_{n}$, this is precisely the case
$\mathbb{I}_{n}=Q_{1}^{\tau}\cup Q_{2}^{\tau}=Q_{1}^{\mu}\cup Q_{2}^{\mu}$ and
all the factors involved have the form $x_{i}^{2}\wedge x_{j}^{2}$. Moreover,
in the case where $W=W(D_{n})$ then $\left\vert L_{1}^{\tau}\right\vert $ is
an even number for every involution $\tau\in W(D_{n})$.
\end{remark}

We find it useful to introduce the symmetric bilinear form $\left\langle
\cdot,\cdot\right\rangle $ on $\mathcal{M}$ with orthonormal basis $\left\{
e_{\tau}:\tau\in\mathcal{I}\right\}  $. Using the formula for the way elements
of $W$ act on the basis vectors of $\mathcal{M}$, it follows that the form is
invariant under $W$. In particular%
\[%
\begin{array}
[c]{ccc}%
\left\langle \sigma e_{\tau},\sigma e_{\mu}\right\rangle =\pm\left\langle
e_{\sigma\tau\sigma^{-1}},e_{\sigma\mu\sigma^{-1}}\right\rangle =\delta
_{\tau,\mu} &  & \forall\tau,\mu\in\mathcal{I},\mathcal{\ \forall\sigma}\in W.
\end{array}
\]

\begin{lemma}
\label{2}If $W$ is a Weyl group of type $A_{n-1},$ $B_{n}$ or $D_{n}$ with $n$
odd in the last case, then End$_{W}\left(  \mathcal{M},\mathcal{M}\right)  $
is a commutative ring.
\end{lemma}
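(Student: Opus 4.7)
The plan is to show $\operatorname{End}_{W}(\mathcal{M})$ is commutative by proving that, for any $T\in\operatorname{End}_{W}(\mathcal{M})$, the matrix coefficient $\langle Te_{\tau},e_{\mu}\rangle$ vanishes unless $\tau\sim_{\mathfrak{S}_{n}}\mu$; this yields the direct-sum decomposition $\operatorname{End}_{W}(\mathcal{M})=\bigoplus_{\rho\in\mathfrak{R}}\operatorname{End}_{W}(\mathcal{M}_{\rho})$ and reduces commutativity to each orbit summand.

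The first reduction exploits the abelian normal subgroup $\mathcal{N}:=\mathcal{C}_{2}^{n}\cap W$. A direct computation from the action of $W$ on $\mathcal{P}_{1}$ gives $\epsilon\cdot e_{\tau}=\bigl(\prod_{i\in L^{\tau}}\epsilon_{i}\bigr)e_{\tau}$ with $L^{\tau}:=L_{1}^{\tau}\cup L_{2}^{\tau}$, so each $e_{\tau}$ is an $\mathcal{N}$-eigenvector with a character depending only on $L^{\tau}$. Since $T$ commutes with $\mathcal{N}$, the matrix coefficient vanishes unless these characters agree on $\mathcal{N}$. For $W(B_{n})$ this forces $L^{\tau}=L^{\mu}$; for $W(D_{n})$ the only other possibility is $L^{\tau}=\mathbb{I}_{n}\setminus L^{\mu}$, which is excluded when $n$ is odd by parity (the constraint $\zeta_{1}\cdots\zeta_{n}=1$ forces $|L^{\tau}|$ even, while $n-|L^{\mu}|$ has opposite parity). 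Next, setting $L:=L^{\tau}=L^{\mu}$, factorize $e_{\tau}=e_{\tau_{L}}\cdot e_{\tau_{L^{c}}}$ in $S(V)$, where $\tau_{L}$ is a pure-linear involution in $W(B_{L})$ (so $\zeta_{L}=-\boldsymbol{I}$) and $\tau_{L^{c}}$ is an involution in $\mathfrak{S}_{L^{c}}$. These are exactly the two hypotheses of Lemma~\ref{1}. Applied to $(\tau_{L},\mu_{L})$ inside $W(B_{L})$, the lemma produces either $\tau_{L}\sim_{\mathfrak{S}_{L}}\mu_{L}$ or an involution $\sigma_{L}\in\mathfrak{S}_{L}$ with $\sigma_{L}e_{\tau_{L}}=\pm e_{\tau_{L}}$ and $\sigma_{L}e_{\mu_{L}}=\mp e_{\mu_{L}}$; lifting $\sigma_{L}$ into $\mathfrak{S}_{n}$ (trivial on $L^{c}$) the opposite signs persist on $e_{\tau}$ and $e_{\mu}$, and the commutation $T\sigma_{L}=\sigma_{L}T$ combined with the $W$-invariance of $\langle\cdot,\cdot\rangle$ forces $\langle Te_{\tau},e_{\mu}\rangle=0$. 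A parallel application of Lemma~\ref{1} to $(\tau_{L^{c}},\mu_{L^{c}})$ handles the quadratic factor, and combining both applications shows that a nonzero matrix coefficient forces $\tau\sim_{\mathfrak{S}_{n}}\mu$.

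The genuine obstacle is the remaining step: proving each $\operatorname{End}_{W}(\mathcal{M}_{\rho})$ is commutative, equivalently that each $\mathcal{M}_{\rho}$ is multiplicity-free. I would recognize $\mathcal{M}_{\rho}\cong\operatorname{Ind}_{W_{\rho}}^{W}\bigl(\mathcal{M}_{L}^{\rho_{L}}\otimes\mathcal{M}_{L^{c}}^{\rho_{L^{c}}}\bigr)$, where $W_{\rho}$ is the stabilizer of $L:=L^{\rho}$ in $W$ and the tensor factors are the analogous orbit submodules for $W(B_{L})$ and $\mathfrak{S}_{L^{c}}$; an induction on $n$ combined with a Mackey-style analysis then reduces commutativity to the Kodiyalam--Verma theorem for the symmetric factor and a parallel argument for the pure-linear factor. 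The exclusion of $W(D_{2n})$ enters only in the very first step: when $n$ is even the parity argument breaks down, the character criterion permits $L^{\tau}=\mathbb{I}_{n}\setminus L^{\mu}$ to contribute nontrivial matrix coefficients, and $\operatorname{End}_{W}(\mathcal{M})$ is in fact non-commutative.
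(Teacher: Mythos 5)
Your first half runs parallel to the paper's argument: the paper uses the elements $\eta_{i}$ (the sign change at the coordinate $i$ for $W(B_{n})$, and its negative for $W(D_{n})$, which lies in the group exactly when $n$ is odd) to show that $\langle \phi e_{\tau},e_{\mu}\rangle=0$ whenever $L_{1}^{\tau}\cup L_{2}^{\tau}\neq L_{1}^{\mu}\cup L_{2}^{\mu}$, and then factors $\tau=\tau_{J}\tau_{K}$, $\mu=\mu_{J}\mu_{K}$ over $W_{J}\times W_{K}$ and applies Lemma \ref{1} to each factor, exactly as you propose. Up to that point your reduction is sound.

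The gap is in your final step. Having reduced to $\mathrm{End}_{W}(\mathcal{M})=\bigoplus_{\rho}\mathrm{End}_{W}(\mathcal{M}_{\rho})$, you still must show each block is commutative, i.e.\ each orbit module $\mathcal{M}_{\rho}$ is multiplicity-free; you defer this to an induction on $n$ with a Mackey-style analysis of an induced-module description of $\mathcal{M}_{\rho}$ and an appeal to the Kodiyalam--Verma theorem. None of this is carried out, and it is precisely the hard part of the lemma: the orbit blocks are where all the multiplicity questions live. The paper avoids the issue entirely by extracting more from Lemma \ref{1} than you do. In its second alternative the lemma produces an \emph{involution} $\sigma\in\mathfrak{S}_{n}$ with $\sigma e_{\tau}=\pm e_{\mu}$, hence also $\sigma e_{\mu}=\pm e_{\tau}$ with the same sign, and therefore
\[
\langle e_{\tau},\phi e_{\mu}\rangle=\langle \sigma e_{\tau},\sigma\phi e_{\mu}\rangle=\langle \pm e_{\mu},\pm\phi e_{\tau}\rangle=\langle e_{\mu},\phi e_{\tau}\rangle .
\]
Combined with the vanishing in the first alternative and in the case of distinct $L$-sets, this shows that every $\phi\in\mathrm{End}_{W}(\mathcal{M})$ is a symmetric operator in the orthonormal basis $\{e_{\tau}\}$; an algebra of symmetric operators is automatically commutative, since $\phi\psi=(\phi\psi)^{t}=\psi^{t}\phi^{t}=\psi\phi$. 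You treated the alternative $\sigma e_{\tau}=\pm e_{\mu}$ merely as the assertion that $\tau$ and $\mu$ are $\mathfrak{S}_{n}$-conjugate, discarding the sign information that lets the argument close with no induction and no external multiplicity-one input. To repair your proof you would either have to supply the missing Mackey/induction argument in full, or switch to this symmetric-operator endgame.
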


\begin{proof}
Fix $\phi\in$ End$_{W}\left(  \mathcal{M},\mathcal{M}\right)  $. We will show
that $\phi$ is a complex symmetric operator with respect to the invariant form
defined above$.$ In particular we will show that
\[%
\begin{array}
[c]{ccc}%
\left\langle \phi e_{\tau},e_{\mu}\right\rangle =\left\langle e_{\tau},\phi
e_{\mu}\right\rangle  &  & \forall\tau,\mu\in\mathcal{I}\text{.}%
\end{array}
\]
Let $\sigma_{i}$ be the reflection in $W\left(  B_{n}\right)  $ defined as%
\[
\sigma_{i}x_{j}=\left(  1-2\delta_{ij}\right)  x_{j}.
\]
We denote by $\eta_{i}$ the element in $W$ given by%
\[
\eta_{i}x_{j}=\left\{
\begin{array}
[c]{ccc}%
\sigma_{i}x_{j} & \text{if} & W=W\left(  B_{n}\right) \\
-\left(  \sigma_{i}x_{j}\right)  & \text{if} & W=W\left(  D_{n}\right)
\end{array}
\right.  \text{.}%
\]
This last point is where use the fact that $n$ is odd for the group $W\left(
D_{n}\right)  $, since $\eta_{i}$ does not belong to $W\left(  D_{n}\right)  $
when $n$ is even . If $\tau\in\mathcal{I}$ and $i\in\mathbb{I}_{n}$ we claim
that
\[
\eta_{i}e_{\tau}=\left\{
\begin{array}
[c]{ccc}%
e_{\tau} & \text{if} & i\notin L_{1}^{\tau}\cup L_{2}^{\tau}\\
-e_{\tau} & \text{if} & i\in L_{1}^{\tau}\cup L_{2}^{\tau}%
\end{array}
\right.  \text{.}%
\]
This is clear if $W=W\left(  B_{n}\right)  $ and in the case that $W=W\left(
D_{n}\right)  $ we have
\[
\eta_{i}e_{\tau}=\left\{
\begin{array}
[c]{ccc}%
\left(  -1\right)  ^{\left\vert L_{1}^{\tau}\cup L_{2}^{\tau}\right\vert
}e_{\tau} & \text{if} & i\notin L_{1}^{\tau}\cup L_{2}^{\tau}\\
\left(  -1\right)  ^{\left\vert L_{1}^{\tau}\cup L_{2}^{\tau}\right\vert
-1}e_{\tau} & \text{if} & i\in L_{1}^{\tau}\cup L_{2}^{\tau}%
\end{array}
\right.  .
\]
Thus the assertion follows from the fact that $\left\vert L_{1}^{\tau
}\right\vert $ and $\left\vert L_{2}^{\tau}\right\vert $ are even numbers and
the condition $L_{1}^{\tau}\cap L_{2}^{\tau}=\emptyset$.

Let $\tau,\omega\in\mathcal{I}$ and suppose $i\in\left(  L_{1}^{\tau}\cup
L_{2}^{\tau}\right)  -\left(  L_{1}^{\omega}\cup L_{2}^{\omega}\right)  $.
Then we have
\[%
\begin{array}
[c]{ccc}%
\eta_{i}e_{\tau}=-e_{\tau} & \text{and} & \eta_{i}e_{\omega}=e_{\omega
}\text{.}%
\end{array}
\]
Hence
\[
\left\langle e_{\tau},\phi e_{\omega}\right\rangle =\left\langle \eta
_{i}e_{\tau},\eta_{i}\phi e_{\omega}\right\rangle =\left\langle \eta
_{i}e_{\tau},\phi\eta_{i}e_{\omega}\right\rangle =-\left\langle e_{\tau},\phi
e_{\omega}\right\rangle
\]
that is $\left\langle e_{\tau},\phi e_{\omega}\right\rangle =0$ and similarly
$\left\langle \phi e_{\tau},e_{\omega}\right\rangle =0$. In the case that
$L_{1}^{\omega}\cup L_{2}^{\omega}=L_{1}^{\tau}\cup L_{2}^{\tau}$ we let $J$
denote this union and also write $Q_{1}^{\omega}\cup Q_{2}^{\omega}%
=Q_{1}^{\tau}\cup Q_{2}^{\tau}=K$. Let $W_{J}$ and $W_{K}$ be the subgroups of
$W$ defined by:%
\begin{align*}
W_{J}  &  =\left\{  \sigma\in W:\sigma x_{k}=x_{k},\ \forall k\in K\right\} \\
W_{K}  &  =\left\{  \sigma\in W:\sigma x_{j}=x_{j},\ \forall j\in J\right\}
\text{.}%
\end{align*}
Then $W_{J}$ and $W_{K}$ are both Weyl groups of the same type as $W$. The
involutions $\tau$ and $\omega$ can be factored as:%
\[%
\begin{array}
[c]{ccc}%
\tau=\tau_{J}\ \tau_{K} & \text{and} & \omega=\omega_{J}\ \omega_{K}%
\end{array}
\]
in $W_{J}\times W_{K}$. Applying Lemma \ref{1} to the pairs $e_{\tau_{J}}$,
$e_{\omega_{J}}$ and $e_{\tau_{K}}$, $e_{\omega_{K}}$, we conclude that there
is an involution $\omega$ such that:%
\[%
\begin{array}
[c]{ccc}%
\omega e_{\tau}=-e_{\tau}\text{ and }\omega e_{\omega}=e_{\omega} & \text{or}
& \omega e_{\tau}=\pm e_{\omega}\text{.}%
\end{array}
\]
In the first case, as before, $\left\langle e_{\tau},\phi e_{\omega
}\right\rangle =0=\left\langle e_{\omega},\phi e_{\omega}\right\rangle $ and
in the second case we have:%
\[
\left\langle e_{\tau},\phi e_{\omega}\right\rangle =\left\langle \omega
e_{\tau},\omega\phi e_{\omega}\right\rangle =\left\langle \pm e_{\omega}%
,\pm\phi e_{\tau}\right\rangle =\left\langle e_{\omega},\phi e_{\tau
}\right\rangle =\left\langle \phi e_{\tau},e_{\omega}\right\rangle \text{.}%
\]
Hence End$_{W}\left(  \mathcal{M},\mathcal{M}\right)  $ is an algebra of
operators contained in the space of symmetric operators. It follows that
End$_{W}\left(  \mathcal{M},\mathcal{M}\right)  $ is commutative.\medskip
\end{proof}

\begin{theorem}
Suppose $W$ is a Weyl group of type $A_{n},B_{n}$ or $D_{2n+1}$, then
$\mathcal{M}$ is a Gelfand Model for $W$.\medskip
\end{theorem}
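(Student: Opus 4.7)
The plan is to assemble the theorem from the three ingredients already in place: the dimension count for $\mathcal{M}$, the commutativity of its endomorphism algebra, and a classical fact about Weyl groups. First I would recall that for any finite group $G$ and any finite-dimensional complex representation $V$, a direct application of Schur's lemma gives the characterization that $V$ is multiplicity-free if and only if $\mathrm{End}_{G}(V)$ is a commutative ring. Applying this with $G=W$ and $V=\mathcal{M}$, Lemma \ref{2} immediately yields that $\mathcal{M}$ is multiplicity-free in the three listed cases $A_{n-1}$, $B_{n}$ and $D_{2n+1}$.

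Next I would invoke the stated fact (mentioned just before Lemma \ref{1}) that for any Weyl group $W$ the sum of the dimensions of its irreducible complex representations equals the number of involutions in $W$. This is the Frobenius--Schur consequence of the fact that every irreducible representation of a Weyl group can be realized over $\mathbb{R}$, so every Frobenius--Schur indicator equals $+1$ and therefore $\sum_{\chi\in\widehat{W}}\dim\chi=|\mathcal{I}|$. Combined with part (i) of the preceding proposition, which states $\dim_{\mathbb{C}}(\mathcal{M})=|\mathcal{I}|$, this gives the identity
\[
\dim_{\mathbb{C}}(\mathcal{M})=\sum_{\chi\in\widehat{W}}\dim\chi .
\]

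The final step is to put these two observations together. Decomposing $\mathcal{M}=\bigoplus_{\chi\in\widehat{W}}m_{\chi}\,V_{\chi}$ into isotypic components, multiplicity-freeness forces each $m_{\chi}\in\{0,1\}$, while the dimension identity forces $\sum_{\chi}m_{\chi}\dim\chi=\sum_{\chi}\dim\chi$. Since each $\dim\chi>0$, this is possible only if $m_{\chi}=1$ for every irreducible character $\chi$. Hence $\mathcal{M}$ contains every irreducible representation of $W$ exactly once and is, by definition, a Gelfand model.

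There is no substantial obstacle at this stage: all the representation-theoretic work has already been done in Lemma \ref{1} and Lemma \ref{2}, and the remaining argument is a bookkeeping step combining Schur's lemma with the Frobenius--Schur count of involutions. The only mild point of care is making sure one is restricting to the three types listed in the theorem, because Lemma \ref{2} (and hence the commutativity of $\mathrm{End}_{W}(\mathcal{M})$) is only established in those cases, the group $W(D_{2n})$ being explicitly excluded by the parity condition needed to have $\eta_{i}\in W$.
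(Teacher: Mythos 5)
Your proposal is correct and follows essentially the same route as the paper: the paper's proof likewise deduces multiplicity-freeness from Lemma \ref{2} (implicitly via the Schur's-lemma equivalence you spell out) and then combines $\dim_{\mathbb{C}}(\mathcal{M})=|\mathcal{I}|$ with the fact that this number equals the dimension of a Gelfand model for a Weyl group. Your write-up merely makes explicit the Frobenius--Schur bookkeeping that the paper leaves implicit.
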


\begin{proof}
By Lemma \ref{2} $\mathcal{M}$ is a multiplicity-free $W$-module. On the other
hand, the dimension of $\mathcal{M}$ coincides with the number of involutions.
As mentioned above, this is known to be the dimension of a Gelfand model for a
Weyl group.
\end{proof}

\begin{remark}
Since Baddeley has shown in his doctoral thesis that a Weyl group of type
$D_{2n}$ does not have an involution model (see, for example \cite{vinroot}),
one would expect that the representation defined in this section is not a
Gelfand model for $W(D_{2n})$. We note that our module $\mathcal{M}$ is not
mutiplicity-free in this case. To explain, introduce the following notation.
If $\tau=(\zeta,\pi)\in W(B_{n})$ define $-\tau=(-\zeta,\pi)$. Now observe
that if $\boldsymbol{I}$ denotes the identity in $W(D_{n})$ then both
$e_{\boldsymbol{I}}=1$ as well as $e_{-\boldsymbol{I}}=x_{1}\cdots x_{2n}$
generate the trivial representation for $W(D_{n})$, the point being that
$-\boldsymbol{I}$ does not belong to $W(D_{n})$ when $n$ is odd. In the
following section we will show, in general, that $\mathcal{M}_{\tau}%
\cong\mathcal{M}_{-\tau}$ for the group $W(D_{n})$, the difference being that
when $n$ is odd then $-\tau$ is not in $W(D_{n})$ for $\tau\in\mathcal{I}$.
\end{remark}

\section{\noindent Relation to the polynomial model}

Suppose $G\subseteq GL_{n}\left(  \mathbb{C}\right)  $ is a finite subgroup.
The group $G$ acts on the space of complex valued polynomials $\mathcal{P}%
=\mathbb{C}\left[  x_{1},\ldots,x_{n}\right]  $ according to the natural
action of $GL(n,\mathbb{C})$. The polynomial model for the group $G$ is based
on a subspace $\mathcal{N}_{G}\subseteq\mathcal{P}$ determined as the zeros of
an associated subalgebra of the $G$-invariant differential operators defined
on $\mathcal{P}$. In particular, let $\mathcal{D}$ denote the Weyl algebra of
differential operators with polynomial coefficients. Then $G$ acts naturally
on $\mathcal{D}$ by
\[%
\begin{array}
[c]{ccc}%
g\cdot D=gDg^{-1} &  & g\in G,D\in\mathcal{D}%
\end{array}
\]
where both $g$ and $D$ are taken as elements in End$_{\mathbb{C}}\left(
\mathcal{P}\right)  $. Let $\mathcal{D}^{G}$ denote the centralizer of $G$ in
$\mathcal{D}$. We will define a $G$ invariant subset of elements of negative
degree in $\mathcal{D}^{G}$. Let $\mathbb{I}_{n}=\left\{  1,2,\ldots
,n\right\}  $ be the set of indices and let $M=\left\{  \alpha:\mathbb{I}%
_{n}\rightarrow\mathbb{N}_{0}=\mathbb{N\cup}\left\{  0\right\}  \right\}  $
be\emph{ the set of multi-indices in} $\mathbb{I}_{n}$. Given $\alpha\in M$ we
put:
\[
\left\vert \alpha\right\vert =\sum_{i=1}^{n}\alpha(i).
\]
Each element in $\mathcal{D}$ can be written uniquely in the form
\[
\sum_{\alpha,\beta\in M}a_{\alpha,\beta}\ x^{\alpha}\partial^{\beta}.
\]
We define \emph{the algebra} $\mathcal{D}_{-}^{G}$ \emph{of invariant
operators of negative degree }as follows%
\[
\mathcal{D}_{-}^{G}=\left\{  D\in\mathcal{D}^{G}:D=\sum_{\left\vert
\beta\right\vert >\left\vert \alpha\right\vert }a_{\alpha,\beta}\ x^{\alpha
}\partial^{\beta}\right\}  \text{.}%
\]
Observe that if
\[
V_{m}=%
{\displaystyle\bigoplus\limits_{k=0}^{m}}
\mathcal{P}_{k}%
\]
then $\mathcal{D}_{-}^{G}$ determines a subalgebra of nilpotent operators in
Hom$_{G}(V_{m})$.

The space $\mathcal{N}_{G}$ is defined by
\[
\mathcal{N}_{G}=\left\{  P\in\mathcal{P}:D\left(  P\right)  =0,\forall
D\in\mathcal{D}_{-}^{G}\right\}  .
\]
\ 

\noindent In general one knows that $\mathcal{N}_{G}$ contains a Gelfand model
for $G$ \cite{aguado2}. As mentioned in the introduction, when $G$ is a Weyl
group of type $A_{n}$, $B_{n}$, or $D_{2n+1}$ realized in linear form by its
geometric representation then $\mathcal{N}_{G}$ is a Gelfand model for $G$. We
note when $G=W$ is a Weyl group of type $A_{n-1}$, $B_{n}$, or $D_{n}$ then
the geometric representation is defined in $\mathbb{C}^{n}$ and is given by
\[
\left(  \zeta,\pi\right)  \cdot e_{j}=\zeta_{\pi(j)}e_{\pi(j)}%
\]
where $\left\{  e_{1,}\ldots,e_{n}\right\}  $ is the natural basis. In
particular, the corresponding action of $W$ in the space of polynomials
$\mathcal{P}$ is nothing but the action defined in the previous section.

In this section we will construct a specific isomorphism between the Gelfand
model defined in the previous section and the corresponding polynomial model.
We continue to let $\mathcal{I}$ denote the set of involutions in $W$.
Recalling the notation from the previous section, for every $\tau
\in\mathcal{I}$ we have a corresponding decomposition of $\mathbb{I}_{n}$ into
disjoint subsets $L_{1}^{\tau}$, $L_{2}^{\tau}$, $Q_{1}^{\tau}$ and
$Q_{2}^{\tau}$. As in the previous section we write $\tau=\left(  \zeta
,\pi\right)  $. Let
\[
\pi=\pi_{1}\pi_{2}\cdots\pi_{m}%
\]
be the cyclic decomposition of $\pi$. Since $\tau$ is an involution each cycle
$\pi_{j}$ is a transposition, say $\pi_{j}=(k,l)$. It also follows that either
$\zeta_{k}$ and $\zeta_{l}$ are both $-1$ or both $1,$ that is:
\[
\left\{  k,l\right\}  \subseteq L_{2}^{\tau}\text{ or }\left\{  k,l\right\}
\subseteq Q_{2}^{\tau}\text{.}%
\]
If $\left\{  k,l\right\}  \subseteq Q_{2}^{\tau}$ define $\tau_{j}%
=(\boldsymbol{I},\pi_{j})$. In this case we say $\tau_{j}$ is \emph{a positive
cycle.} When $\left\{  k,l\right\}  \subseteq L_{2}^{\tau}$ define $\tau
_{j}=(\xi,\pi_{j})$ where $\xi_{i}=-1$ for $i\in\left\{  k,l\right\}  $ and
$\xi_{i}=1$ if $i\notin\left\{  k,l\right\}  $ and call $\tau_{j}$ \emph{a
negative cycle}. Let $\tau^{+}\in W$ be the product of the positive cycles
that decompose $\tau$ and let $\tau^{-}\in W$ be the product of the negative
cycles that decompose $\tau$. Thus $\tau=\tau^{+}\tau^{-}$ when $L_{1}^{\tau
}=\emptyset$ and in general $\tau=\sigma\tau^{+}\tau^{-}$ where $\sigma
=\left(  \zeta,\boldsymbol{I}\right)  $ and
\[
\zeta_{i}=\left\{
\begin{array}
[c]{ccc}%
-1 & \text{if} & i\in L_{1}^{\tau}\\
1 & \text{if} & i\notin L_{1}^{\tau}%
\end{array}
\right.  \text{.}%
\]
In a key construction that follows, we will utilize the subgroup
$\vartheta_{\tau}=\vartheta_{\tau}^{+}\times\vartheta_{\tau}^{-}$ of
$\mathfrak{S}\left(  Q_{2}^{\tau}\right)  \times\mathfrak{S}\left(
L_{2}^{\tau}\right)  $ where $\vartheta_{\tau}^{+}$ and $\vartheta_{\tau}^{-}$
are the centralizers of $\tau^{+}$ and $\tau^{-}$ in $\mathfrak{S}\left(
Q_{2}^{\tau}\right)  $ and $\mathfrak{S}\left(  L_{2}^{\tau}\right)  $, respectively.

To each subgroup $\mathfrak{K\subseteq S}_{n}$ we can associate the operator
$\Omega_{\mathfrak{K}}$ defined as%
\[
\Omega_{\mathfrak{K}}=\sum_{\kappa\in\mathfrak{K}}sgn\left(  \kappa\right)
\kappa\text{.}%
\]
Observe that this operator can be applied naturally to the elements of any
$\mathfrak{S}_{n}$-module. \ 

\begin{remark}
Given $\pi\in\mathfrak{S}_{n}$ note that:%
\[
\pi\Omega_{\mathfrak{K}}\pi^{-1}=\Omega_{\pi\mathfrak{K\pi}^{-1}}%
\]
so that%
\[
\pi\Omega_{\mathfrak{K}}=\Omega_{\mathfrak{K}}\pi=sg\left(  \pi\right)
\Omega_{\mathfrak{K}}\text{ if }\pi\in\mathfrak{K.}%
\]

\end{remark}

Suppose $\tau=(\zeta,\omega)$ is an involution. Then we can order the sets
\[%
\begin{array}
[c]{ccc}%
L_{2}^{\tau}=\left\{  i_{1},j_{1},\ldots,i_{r},j_{r}\right\}  & \text{and} &
Q_{2}^{\tau}=\left\{  k_{1},l_{1},\ldots,k_{s},l_{s}\right\}
\end{array}
\]
such that the sequences $i_{1},\ldots,i_{r}$ and $k_{1},\ldots,k_{s}$ are
strictly increasing and where $i_{p}<j_{p}=\omega\left(  i_{p}\right)  $,
$k_{q}<l_{q}=\omega\left(  k_{q}\right)  $ for $1\leq p\leq r$, $1\leq q\leq
s$. We define a corresponding multi-index $\alpha_{\tau}\in M$ in the
following manner%

\[
\alpha_{\tau}\left(  i\right)  =\left\{
\begin{array}
[c]{ccc}%
1 & \text{if} & i\in L_{1}^{\tau}\\
0 & \text{if} & i\in Q_{1}^{\tau}%
\end{array}
\right.  \text{ and }%
\begin{array}
[c]{ccc}%
\alpha_{\tau}\left(  i_{p}\right)  =4p-3 &  & \alpha_{\tau}\left(
j_{p}\right)  =4p-1\\
\alpha_{\tau}\left(  k_{q}\right)  =4q-4 &  & \alpha_{\tau}\left(
l_{q}\right)  =4q-2
\end{array}
\text{.}%
\]

To an involution $\tau\in W$ we associate the polynomial%
\[
P_{\tau}=\Omega_{\vartheta_{\tau}}x^{\alpha_{\tau}}%
\]
where $\vartheta_{\tau}=\vartheta_{\tau}^{+}\times\vartheta_{\tau}^{-}$ with
$\vartheta_{\tau}^{+}$ and $\vartheta_{\tau}^{-}$ are the centralizers of
$\tau^{+}$ and $\tau^{-}$ in $\mathfrak{S}\left(  Q_{2}^{\tau}\right)  $ and
$\mathfrak{S}\left(  L_{2}^{\tau}\right)  $, respectively.

\begin{remark}
In the simple case $L_{1}^{\tau}=\emptyset$, $\tau^{-}=\left(  \left(
-1,-1,1.\cdots,1\right)  ,\left(  12\right)  \right)  $ and $\tau^{+}=\left(
\boldsymbol{I},(34)\right)  $. Then $P_{\tau}$ is exactly the product of the
two determinants
\[
P_{\tau}=\left\vert
\begin{array}
[c]{cc}%
x_{1} & x_{2}\\
x_{1}^{3} & x_{2}^{3}%
\end{array}
\right\vert \left\vert
\begin{array}
[c]{cc}%
1 & 1\\
x_{3}^{2} & x_{4}^{2}%
\end{array}
\right\vert .
\]
In general, given the sets $L_{2}^{\tau}=\left\{  i_{1},j_{1},\ldots
,i_{r},j_{r}\right\}  $ and $Q_{2}^{\tau}=\left\{  k_{1},l_{1},\ldots
,k_{s},l_{s}\right\}  $ then the product of the two determinants is given by
\[
\left\vert
\begin{array}
[c]{ccccc}%
x_{i_{1}} & x_{j_{1}} & \cdots & x_{i_{r}} & x_{j_{r}}\\
x_{i_{1}}^{3} & x_{j_{1}}^{3} & \cdots & x_{i_{r}}^{3} & x_{j_{r}}^{3}\\
x_{i_{1}}^{5} & x_{j_{1}}^{5} & x_{i_{2}}^{5} & \cdots & x_{j_{r}}^{5}\\
\vdots & \vdots & \vdots & \ddots & \vdots\\
x_{i_{1}}^{4r-1} & x_{j_{1}}^{4r-1} & x_{i_{2}}^{4r-1} & \cdots & x_{j_{r}%
}^{4r-1}%
\end{array}
\right\vert \left\vert
\begin{array}
[c]{ccccc}%
1 & 1 & \cdots & 1 & 1\\
x_{k_{1}}^{2} & x_{l_{1}}^{2} & \cdots & x_{k_{s}}^{2} & x_{l_{s}}^{2}\\
x_{k_{1}}^{4} & x_{l_{1}}^{4} & x_{k_{2}}^{4} & \cdots & x_{l_{s}}^{4}\\
\vdots & \vdots & \vdots & \ddots & \vdots\\
x_{k_{1}}^{4s-2} & x_{l_{1}}^{4s-2} & x_{k_{2}}^{4s-2} & \cdots & x_{l_{s}%
}^{4s-2}%
\end{array}
\right\vert =
\]%
\[%
{\displaystyle\sum\limits_{\pi\in\mathfrak{S}\left(  L_{2}^{\tau}\right)
\times\mathfrak{S}\left(  Q_{2}^{\tau}\right)  }}
sgn\left(  \pi\right)  \pi\cdot x_{i_{1}}x_{j_{1}}^{3}x_{i_{2}}^{5}\cdots
x_{j_{r}}^{4r-1}x_{l_{1}}^{2}x_{k_{2}}^{4}\cdots x_{l_{s}}^{4s-2}%
\]
whereas, assuming $L_{1}^{\tau}=\emptyset$,
\[
P_{\tau}=%
{\displaystyle\sum\limits_{\pi\in\vartheta_{\tau}^{-}\times\vartheta_{\tau
}^{+}}}
sgn\left(  \pi\right)  \pi\cdot x_{i_{1}}x_{j_{1}}^{3}x_{i_{2}}^{5}\cdots
x_{j_{r}}^{4r-1}x_{l_{1}}^{2}x_{k_{2}}^{4}\cdots x_{l_{s}}^{4s-2}%
\]
is the signed sum over the smaller group of permutations.
\end{remark}

It will be useful to consider the following definition. Suppose we have two
disjoint ordered subsets of indices $I=\left\{  i_{1},i_{2},\ldots
,i_{r}\right\}  $, $J=\left\{  j_{1},j_{2},\ldots,j_{r}\right\}  $ of the same
cardinality and let $\pi$ be a permutation of $I$. Then we can extend $\pi$ to
a permutation $\overline{\pi}$ of $I\cup J$ by defining
\[
\overline{\pi}(j_{k})=j_{l}\text{ if }\pi(i_{k})=i_{l}\text{.}%
\]
The permutation $\overline{\pi}$ will be called the \emph{corresponding double
permutation. }

\begin{example}
Suppose $S=\left\{  1,2,3,4,5,6\right\}  $. We divide $S$ into two ordered
sets
\[
I=\left\{  i_{1},i_{2},i_{3}\right\}  =\left\{  2,4,1\right\}  \ \text{and
}J=\left\{  j_{1},j_{2},j_{3}\right\}  =\left\{  3,6,5\right\}
\]
such that $i_{k}<j_{k}$ for each $k$. Consider the involution
\[
\tau=(23)(46)(15).
\]
Let $\pi$ be the permutation of $I$ such that $\pi(i_{1})<\pi(i_{2})<\pi
(i_{3})$ and let $\overline{\pi}$ be the corresponding double permutation
of\emph{ }$S$. Thus $\overline{\pi}$ commutes with $\tau$.
\end{example}

Let $\mathfrak{M}\subseteq\mathcal{P}$ denote the $W$-submodule generated by
the polynomials $P_{\tau}$ with $\tau\in\mathcal{I}$. The following
proposition shows that the linear map determined by the assignment
\[
e_{\tau}\mapsto P_{\tau}%
\]
defines an isomorphism between $\mathfrak{M}$ and a submodule of the Gelfand
module $\mathcal{M}$ constructed in the previous section (latter on we will
see that $\mathcal{M}\cong\mathfrak{M}$).

\begin{proposition}
The linear application $\phi:\mathcal{M}\rightarrow\mathfrak{M}$ defined on
the basis vectors by the formula $\phi\left(  e_{\tau}\right)  =P_{\tau}$, for
$\tau\in\mathcal{I}$, is a morphism of $W$-modules.
\end{proposition}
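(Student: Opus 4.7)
The plan is to verify $W$-equivariance of $\phi$ on a convenient set of generators. Since $W \subseteq W(B_n) = \mathcal{C}_2^n \rtimes \mathfrak{S}_n$ and $\phi$ is linear, it suffices to check $\phi(g \cdot e_\tau) = g \cdot P_\tau$ for every $\tau \in \mathcal{I}$ and every $g$ lying either in $\mathfrak{S}_n$ or in $\mathcal{C}_2^n \cap W$. From the explicit transformation law earlier in the section we have $\eta e_\tau = \varepsilon(\eta,\tau)\, e_{\eta\tau\eta^{-1}}$ for $\eta \in \mathfrak{S}_n$, while a direct computation from the definition of $e_\tau$ gives $\xi e_\tau = \bigl(\prod_{i \in L_1^\tau \cup L_2^\tau} \xi_i\bigr) e_\tau$ for $\xi \in \mathcal{C}_2^n$ (since $x_i$ appears with odd total degree in $e_\tau$ precisely when $i \in L_1^\tau \cup L_2^\tau$). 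My goal is to establish the same identities with $P_\tau$ in place of $e_\tau$ and with identical signs.

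For the sign-change action I would compute
\[
\xi P_\tau = \sum_{\kappa \in \vartheta_\tau} sgn(\kappa)\, \xi \cdot x^{\kappa \cdot \alpha_\tau} = \sum_{\kappa \in \vartheta_\tau} sgn(\kappa) \Bigl( \prod_j \xi_{\kappa(j)}^{\alpha_\tau(j)} \Bigr) x^{\kappa \cdot \alpha_\tau}.
\]
Because $\alpha_\tau(j)$ is odd exactly when $j \in L_1^\tau \cup L_2^\tau$, and because $\kappa \in \vartheta_\tau$ permutes $L_2^\tau$ and $Q_2^\tau$ among themselves while fixing $L_1^\tau$ and $Q_1^\tau$ pointwise, the scalar $\prod_j \xi_{\kappa(j)}^{\alpha_\tau(j)}$ simplifies to $\prod_{j \in L_1^\tau \cup L_2^\tau} \xi_j$, independent of $\kappa$. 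Thus $\xi P_\tau = \bigl(\prod_{j \in L_1^\tau \cup L_2^\tau} \xi_j\bigr) P_\tau$, matching $\xi e_\tau$ on the nose.

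For the permutation action I would use the preceding Remark to pass $\eta$ through $\Omega_{\vartheta_\tau}$, together with the identification $\eta \vartheta_\tau \eta^{-1} = \vartheta_{\eta\tau\eta^{-1}}$ (which holds because conjugation by $\eta$ sends $L_2^\tau, Q_2^\tau$ and $\tau^\pm$ to the analogous objects for $\eta\tau\eta^{-1}$), to obtain
\[
\eta P_\tau = \Omega_{\vartheta_{\eta\tau\eta^{-1}}}\, x^{\eta \cdot \alpha_\tau}.
\]
The multi-indices $\eta \cdot \alpha_\tau$ and $\alpha_{\eta\tau\eta^{-1}}$ distribute the same multiset of exponents over $\mathbb{I}_n$ but enumerate the pairs of $L_2^{\eta\tau\eta^{-1}}$ and $Q_2^{\eta\tau\eta^{-1}}$ differently; their discrepancy is a permutation $\kappa \in \vartheta_{\eta\tau\eta^{-1}}$ combining within-pair swaps (one for every pair whose internal order $\eta$ reverses) with a reshuffling among the pairs. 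The identity $\Omega_\vartheta\, x^{\kappa \cdot \alpha} = sgn(\kappa)\, \Omega_\vartheta\, x^\alpha$ for $\kappa \in \vartheta$, a direct consequence of the preceding Remark, then yields $\eta P_\tau = sgn(\kappa)\, P_{\eta\tau\eta^{-1}}$.

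The main obstacle is matching $sgn(\kappa)$ with $\varepsilon(\eta,\tau)$. The key observation is that every element of $\vartheta \cong \mathcal{C}_2 \wr \mathfrak{S}_r$ factors uniquely as a product of within-pair swaps and a pure pair-permutation, and any pure pair-permutation has sign $+1$ in $\mathfrak{S}_n$ (a $k$-cycle of pairs is a product of two disjoint $k$-cycles, hence of sign $(-1)^{2(k-1)} = +1$). Hence $sgn(\kappa) = (-1)^w$, where $w$ is the number of pairs whose internal order $\eta$ reverses. The same integer $w$ governs $\varepsilon(\eta,\tau)$, because the only source of sign in passing from $\eta e_\tau$ to $e_{\eta\tau\eta^{-1}}$ is the antisymmetry $x_a \wedge x_b = -x_b \wedge x_a$ at each reversed pair (in both the $L_2$ and $Q_2$ factors); reordering the whole wedge factors among themselves contributes nothing, since multiplication in $S(V)$ is commutative. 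Matching the two counts pair by pair completes the proof.
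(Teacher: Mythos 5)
Your proposal is correct and takes essentially the same route as the paper: reduce to the generators $\mathcal{C}_{2}^{n}$ and $\mathfrak{S}_{n}$, show the sign-change element acts on $P_{\tau}$ by the scalar $\prod_{j\in L_{1}^{\tau}\cup L_{2}^{\tau}}\xi_{j}$, and for permutations conjugate $\Omega_{\vartheta_{\tau}}$ past $\eta$ and absorb the discrepancy between $\eta\cdot\alpha_{\tau}$ and $\alpha_{\eta\tau\eta^{-1}}$ into an element of $\vartheta_{\eta\tau\eta^{-1}}$. The paper writes your single correcting permutation $\kappa$ as a product $\overline{\varphi}\kappa$ of a double permutation (sign $+1$) and the within-pair swaps (sign $(-1)^{\varepsilon_{\eta}}$), which is exactly your sign-matching argument.
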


\begin{proof}
Suppose $\tau\in\mathcal{I}$ and let $\sigma=\left(  \zeta,\boldsymbol{I}%
\right)  \in W$. From our definition of the action of $W$ on the basis vectors
in $\mathcal{M}$ it follows that%

\[
\sigma e_{\tau}=\left(
{\displaystyle\prod_{j\in L_{1}^{\tau}\cup L_{2}^{\tau}}}
\zeta_{j}\right)  e_{\tau}\text{.}%
\]
Decomposing the operator $\Omega_{\vartheta_{\tau}}=\Omega_{\vartheta_{\tau
}^{-}}\times\Omega_{\vartheta_{\tau}^{+}}$ and applying this to the
polynomial
\[
x^{\alpha_{\tau}}=\left(
{\displaystyle\prod\limits_{j\in L_{1}^{\tau}}}
x_{j}\right)  \left(
{\displaystyle\prod\limits_{j\in L_{2}^{\tau}}}
x_{j}^{\alpha_{\tau}(j)}\right)  \left(
{\displaystyle\prod\limits_{j\in Q_{2}^{\tau}}}
x_{j}^{\alpha_{\tau}(j)}\right)
\]
it follows that
\[
P_{\tau}=\left(
{\displaystyle\prod\limits_{j\in L_{1}^{\tau}}}
x_{j}\right)  \Omega_{\vartheta_{\tau}^{-}}\left(
{\displaystyle\prod\limits_{j\in L_{2}^{\tau}}}
x_{j}^{\alpha_{\tau}(j)}\right)  \Omega_{\vartheta_{\tau}^{+}}\left(
{\displaystyle\prod\limits_{j\in Q_{2}^{\tau}}}
x_{j}^{\alpha_{\tau}(j)}\right)  \text{.}%
\]
Thus, using that fact that $\Omega_{\vartheta_{\tau}^{-}}$ is linear, we have%

\[
\sigma P_{\tau}=\left(
{\displaystyle\prod_{j\in L_{1}^{\tau}}}
\zeta_{j}\right)  \left(
{\displaystyle\prod\limits_{j\in L_{1}^{\tau}}}
x_{j}\right)  \Omega_{\vartheta_{\tau}^{-}}\left(  \left(
{\displaystyle\prod_{j\in L_{2}^{\tau}}}
\zeta_{j}\right)  \left(
{\displaystyle\prod\limits_{j\in L_{2}^{\tau}}}
x_{j}^{\alpha_{\tau}(j)}\right)  \right)  \Omega_{\vartheta_{\tau}^{+}}\left(
%
{\displaystyle\prod\limits_{j\in Q_{2}^{\tau}}}
x_{j}^{\alpha_{\tau}(j)}\right)  =
\]%
\[
\left(
{\displaystyle\prod_{j\in L_{1}^{\tau}}}
\zeta_{j}\right)  \left(
{\displaystyle\prod_{j\in L_{2}^{\tau}}}
\zeta_{j}\right)  \left(
{\displaystyle\prod\limits_{j\in L_{1}^{\tau}}}
x_{j}\right)  \Omega_{\vartheta_{\tau}^{-}}\left(
{\displaystyle\prod\limits_{j\in L_{2}^{\tau}}}
x_{j}^{\alpha_{\tau}(j)}\right)  \Omega_{\vartheta_{\tau}^{+}}\left(
{\displaystyle\prod\limits_{j\in Q_{2}^{\tau}}}
x_{j}^{\alpha_{\tau}(j)}\right)  =\left(
{\displaystyle\prod_{j\in L_{1}^{\tau}\cup L_{2}^{\tau}}}
\zeta_{j}\right)  P_{\tau}\text{.}%
\]
Therefore $\phi(\sigma e_{\tau})=\sigma\phi\left(  e_{\tau}\right)  $.

Now consider an element in $W$ of the form $\left(  \boldsymbol{I},\pi\right)
$. Then
\[
\pi e_{\tau}=\left(  -1\right)  ^{\varepsilon_{\pi}}e_{\pi\tau\pi^{-1}}%
\]
where $\varepsilon_{\pi}$ is the number of transpositions $\left(  i,j\right)
$ which are factors of $\tau$ with $\pi\left(  i\right)  >\pi\left(  j\right)
$. Using the formula in Remark 3.1 we obtain
\[
\pi P_{\tau}=\pi\Omega_{\vartheta_{\tau}}\pi^{-1}\pi x^{\alpha_{\tau}}%
=\Omega_{\pi\vartheta_{\tau}\pi^{-1}}\pi x^{\alpha_{\tau}}\text{.}%
\]
Consider the sets
\[
L_{1}^{\tau}=\left\{  m_{1},\ldots,m_{t}\right\}  ,\text{ }L_{2}^{\tau
}=\left\{  i_{1},j_{1},\ldots,i_{r},j_{r}\right\}  \text{ and }Q_{2}^{\tau
}=\left\{  k_{1},l_{1},\ldots,k_{s},l_{s}\right\}
\]
ordered in the previously defined way, so that
\[
x^{\alpha_{\tau}}=x_{m_{1}}\cdots x_{m_{t}}x_{i_{1}}x_{j_{1}}^{3}\cdots
x_{i_{r}}^{4r-3}x_{k_{1}}^{0}x_{l_{1}}^{2}\cdots x_{l_{s}}^{4s-2}.
\]
Thus
\[
\pi x^{\alpha_{\tau}}=x_{\pi(m_{1})}\cdots x_{\pi(m_{t})}x_{\pi(i_{1})}%
x_{\pi(j_{1})}^{3}\cdots x_{\pi(i_{r})}^{4r-3}x_{\pi(k_{1})}^{0}x_{\pi(l_{1}%
)}^{2}\cdots x_{\pi(l_{s})}^{4s-2}.
\]
We want to relate the polynomial $\pi x^{\alpha_{\tau}}$ to $x^{\alpha
_{\pi\tau\pi^{-1}}}$. Observe that
\[
L_{1}^{\pi\tau\pi^{-1}}=\left\{  \pi(m_{1}),\ldots,\pi(m_{t})\right\}
\]
and in unordered form we have%
\[
\text{ }L_{2}^{\pi\tau\pi^{-1}}=\left\{  \pi(i_{1}),\pi(j_{1}),\ldots
,\pi(i_{r}),\pi(j_{r})\right\}  \text{, }Q_{2}^{\pi\tau\pi^{-1}}=\left\{
\pi(k_{1}),\pi(l_{1}),\ldots,\pi(k_{s}),\pi(l_{s})\right\}  .
\]
Define \
\[
\kappa=%
{\displaystyle\prod\limits_{\pi\left(  i_{p}\right)  >\pi\left(  j_{p}\right)
}}
\left(  \pi\left(  i_{p}\right)  ,\pi\left(  j_{p}\right)  \right)  \cdot%
{\displaystyle\prod\limits_{\pi\left(  k_{q}\right)  >\pi\left(  l_{q}\right)
}}
\left(  \pi\left(  k_{q}\right)  ,\pi\left(  l_{q}\right)  \right)  .
\]
Thus
\[
L_{2}^{\pi\tau\pi^{-1}}=\left\{  \kappa(\pi(i_{1})),\kappa(\pi(j_{1}%
)),\ldots,\kappa(\pi(i_{r})),\kappa(\pi(j_{r}))\right\}  \text{ and}%
\]%
\[
Q_{2}^{\pi\tau\pi^{-1}}=\left\{  \kappa(\pi(k_{1})),\kappa(\pi(l_{1}%
)),\ldots,\kappa(\pi(k_{s})),\kappa(\pi(l_{s}))\right\}  .
\]
Let $\varphi$ be the permutation that orders the sets
\[
\left\{  \kappa(\pi(i_{1})),\kappa(\pi(i_{2})),\ldots,\kappa(\pi
(i_{r})\right\}  \text{ and }\left\{  \kappa(\pi(k_{1})),\kappa(\pi
(k_{2})),\ldots,\kappa(\pi(k_{s}))\right\}
\]
so that
\[
\varphi(\kappa(\pi(i_{1})))<\varphi(\kappa(\pi(i_{2})))<\cdots<\varphi
(\kappa(\pi(i_{r}))\text{ and }\varphi(\kappa(\pi(k_{1})))<\varphi(\kappa
(\pi(k_{2})))<\cdots<\varphi(\kappa(\pi(k_{s}))).
\]
We let $\overline{\varphi}$ be the corresponding double permutation defined on
$L_{2}^{\pi\tau\pi^{-1}}\cup Q_{2}^{\pi\tau\pi^{-1}}$. Thus%
\[
\overline{\varphi}\kappa\pi x^{\alpha_{\tau}}=x^{\alpha_{\pi\tau\pi^{-1}}}.
\]
Observe that $sgn(\overline{\varphi})=$ $1$ and $sgn(\kappa)=\left(
-1\right)  ^{\varepsilon_{\pi}}$. Since $\kappa$ and $\overline{\varphi}$
belong to $\pi\vartheta_{\tau}\pi^{-1}$, using the formula in Remark 3.1, we
have
\[
\left(  -1\right)  ^{\varepsilon_{\pi}}P_{\pi\tau\pi^{-1}}=\left(  -1\right)
^{\varepsilon_{\pi}}\Omega_{\pi\vartheta_{\tau}\pi^{-1}}x^{\alpha_{\pi\tau
\pi^{-1}}}=sgn(\kappa)sgn(\overline{\varphi}^{-1})\Omega_{\pi\vartheta_{\tau
}\pi^{-1}}x^{\alpha_{\pi\tau\pi^{-1}}}=
\]%
\[
\Omega_{\pi\vartheta_{\tau}\pi^{-1}}\kappa\overline{\varphi}^{-1}%
x^{\alpha_{\pi\tau\pi^{-1}}}=\Omega_{\pi\vartheta_{\tau}\pi^{-1}}%
\kappa\overline{\varphi}^{-1}\overline{\varphi}\kappa\pi x^{\alpha_{\tau}%
}=\Omega_{\pi\vartheta_{\tau}\pi^{-1}}\pi x^{\alpha_{\tau}}=\pi P_{\tau}.
\]
Therefore $\phi(\pi e_{\tau})=\pi\phi\left(  e_{\tau}\right)  $.

Now it is simple matter to check that for a general element $\left(  \zeta
,\pi\right)  =\left(  \zeta,\boldsymbol{I}\right)  \left(  \boldsymbol{I}%
,\pi\right)  \in W$ we have
\[
\phi(\left(  \zeta,\pi\right)  e_{\tau})=\phi(\left(  \zeta,\boldsymbol{I}%
\right)  \left(  \boldsymbol{I},\pi\right)  e_{\tau})=\left(  -1\right)
^{\varepsilon_{\pi}}\phi\left(  \left(  \zeta,\boldsymbol{I}\right)
e_{\pi\tau\pi^{-1}}\right)  =
\]%
\[
\left(  -1\right)  ^{\varepsilon_{\pi}}\left(  \zeta,\boldsymbol{I}\right)
P_{\pi\tau\pi^{-1}}=\left(  \zeta,\boldsymbol{I}\right)  \pi P_{\tau}=\left(
\zeta,\pi\right)  \phi(e_{\tau})\text{.}%
\]

\end{proof}

\begin{definition}
\textbf{Telescopic decomposition\newline}Suppose $G$ is a group, $U$\ is a
$G$-module and $N$\ is a semisimple $G$-submodule of $U$. If $\partial
:U\rightarrow U$ is a nilpotent $G$-morphism then $\partial$ determines an
isomorphism of $N$ with a submodule of $U$ that decomposes as a direct sum
\[
N\cong\oplus_{k\geq0}N_{k}\subseteq U
\]
where%
\[
N_{k}=\left\{  m\in\partial^{k}\left(  N\right)  :\partial\left(  m\right)
=0\right\}  \text{.}%
\]
This $G$-submodule of $U$ will be called the telescopic decomposition of $N$
with respect to $\partial$ and will be denoted as:%
\[
\mathcal{D}_{\partial}\left(  N\right)  =\oplus_{k\geq0}N_{k}\text{.}%
\]
Note that%
\[
N\cong D_{\partial}\left(  N\right)  \subseteq U\text{.}%
\]

\end{definition}

\begin{example}
Consider the group $G=SO(n,\mathbb{R})$ of rotations in $\mathbb{R}^{n}$. We
have the natural action of $G$ on the space of polynomials $\mathcal{P}%
=\mathbb{C}\left[  x_{1},\ldots,x_{n}\right]  $. Let
\[
\partial=\sum_{i=1}^{n}\frac{\partial^{2}}{\partial x_{i}^{2}}%
\]
be the Laplacian and define
\[
U=%
{\displaystyle\bigoplus\limits_{j=0}^{m}}
\mathcal{P}_{j}.
\]
Let $N=\mathcal{P}_{m}$ and let $\mathcal{H}_{j}$ indicate the subspace of
harmonic polynomials in $\mathcal{P}_{j}$. Then the telescopic decomposition
of $N$ with respect to $\partial$ is given by
\[
N_{k}=\mathcal{H}_{m-2k}\text{ so that }\mathcal{D}_{\partial}\left(
N\right)  =\mathcal{H}_{m}\oplus\mathcal{H}_{m-2}\oplus\mathcal{H}_{m-4}%
\oplus\cdots\text{ .}%
\]

\end{example}

For the groups $W=W(A_{n-1})$ or $W=W(B_{n})$ our basic strategy is as
follows. So far we have defined a surjective morphism of $W$-modules
\[
\phi:\mathcal{M}\rightarrow\mathfrak{M}\subseteq\mathcal{P}%
\]
where $\mathcal{M}$ is the Gelfand model from the previous section and where
$\mathfrak{\varphi}\left(  \mathcal{M}\right)  =\mathfrak{M}$ is the image of
$\phi$ in $\mathcal{P}$. We introduce the $W$-invariant differential operator
$\partial$ defined by
\[
\partial=\left\{
\begin{array}
[c]{ccc}%
\sum_{i=1}^{n}\frac{\partial}{\partial x_{i}} & \text{if} & W=W\left(
A_{n-1}\right) \\
\sum_{i=1}^{n}\frac{\partial^{2}}{\partial x_{i}^{2}} & \text{if} & W=W\left(
B_{n}\right)
\end{array}
\right.  .
\]
In what follows we will prove that the polynomial model $\mathcal{N}_{W}$ is
the telescopic decomposition of $\mathfrak{M}$ with respect to $\partial$. The
proof for $W=W(D_{2n+1})$ requires a slight modification and will be included
at the end of the article.

Observe that the symmetric group $\mathfrak{S}_{n}$ acts on the set of
multi-indices $M$ by
\[%
\begin{array}
[c]{ccc}%
\pi\cdot\alpha=\alpha\circ\pi^{-1} &  & \alpha\in M,\ \pi\in\mathfrak{S}_{n}%
\end{array}
.
\]

In the definition that follows, we assume that the elements of a subset
$I=\left\{  i_{1},\ldots,i_{k}\right\}  $ of $\mathbb{I}_{n}=\left\{
1,2,\ldots,n\right\}  $ are arranged in increasing order. Given a multi-index
$\alpha:I\rightarrow\mathbb{N}_{0}$ let $\left[  x_{i_{q}}^{\alpha(i_{p}%
)}\right]  $ be the $k\times k$ matrix with $x_{i_{q}}^{\alpha(i_{p})}$ in the
$(p,q)$-position and let $V\left(  \alpha,I\right)  $ denote the polynomial
defined by the determinant of the matrix $\left[  x_{i_{q}}^{\alpha(i_{p}%
)}\right]  $. Hence
\[
V\left(  \alpha,I\right)  =\left\vert
\begin{array}
[c]{cccc}%
x_{i_{1}}^{\alpha(i_{1})} & x_{i_{2}}^{\alpha(i_{1})} & \cdots & x_{i_{k}%
}^{\alpha(i_{1})}\\
x_{i_{1}}^{\alpha(i_{2})} & x_{i_{2}}^{\alpha(i_{2})} & \cdots & x_{i_{k}%
}^{\alpha(i_{2})}\\
\vdots & \vdots & \cdots & \vdots\\
x_{i_{1}}^{\alpha(i_{k})} & x_{i_{2}}^{\alpha(i_{k})} & \cdots & x_{i_{k}%
}^{\alpha(i_{k})}%
\end{array}
\right\vert .
\]
Observe that $V\left(  \alpha,I\right)  =0$ unless $\alpha:I\rightarrow
\mathbb{N}_{0}$ is injective. Also observe that when $\sigma$ is a permutation
of $I$ then
\[
V\left(  \sigma\cdot\alpha,I\right)  =sgn(\sigma)V\left(  \alpha,I\right)
=\sigma V\left(  \alpha,I\right)
\]
where $\sigma V\left(  \alpha,I\right)  $ indicates the previously defined
action of $\sigma$ on the polynomial $V\left(  \alpha,I\right)  $. Suppose
$V\left(  \alpha,I\right)  \neq0$ and let $\eta$ be the unique permutation of
$I$ such that
\[
\left(  \eta\cdot\alpha\right)  (i_{1})<\left(  \eta\cdot\alpha\right)
(i_{2})<\cdots<\left(  \eta\cdot\alpha\right)  (i_{k}).
\]
We say $V\left(  \alpha,I\right)  $ has \emph{positive orientation} if
$V\left(  \alpha,I\right)  =$ $V\left(  \eta\cdot\alpha,I\right)  $. Otherwise
$V\left(  \alpha,I\right)  $ has \emph{negative orientation}.

\begin{remark}
Suppose $I=\left\{  i_{1},\ldots,i_{k}\right\}  $, $J=\left\{  j_{1}%
,\ldots,j_{k}\right\}  $ are disjoint ordered subsets of indices such that
$i_{k}<j_{k}<i_{k+1}$ and let $F=I\cup J$ be the ordered set
\[
F=\left\{  i_{1},j_{1},i_{2},j_{2},\ldots,i_{k},j_{k}\right\}  .
\]
Suppose $\alpha:F\rightarrow\mathbb{N}_{0}$ is a multi-index and $\pi$ is a
permutation of $I$. Let $\overline{\pi}$ be the corresponding double
permutation. Then $V\left(  \alpha,F\right)  $ and $V\left(  \overline{\pi
}\cdot\alpha,F\right)  $ have the same orientation since the second
determinant is obtained from the first by exchanging two rows at a time.
\end{remark}

We now define the key notion of $W$\emph{-equivalence }on the set of
multi-indices $M$. Suppose $\alpha,\beta\in M$. For $W=W(A_{n-1})$ we say
$\alpha$ is $W$-\emph{equivalent to} $\beta$ if there exists a bijection
\[
\varphi:\alpha(\mathbb{I}_{n})\rightarrow\beta\left(  \mathbb{I}_{n}\right)
\]
such that $\varphi\circ\alpha=\beta$. In the case of $W=W\left(  B_{n}\right)
$ we also assume $\varphi$ sends even numbers to even numbers and odd numbers
to odd numbers, respectively. Write
\[
\alpha\sim_{W}\beta
\]
when $\alpha$ is equivalent to $\beta$. Observe that $W$-equivalence is
well-defined on the set of $\mathfrak{S}_{n}$-orbits in $M$. In particular,
for $\pi\in\mathfrak{S}_{n}$ if $\varphi\circ\alpha=\beta$ then $\varphi
\circ\left(  \pi\cdot\alpha\right)  =\pi\cdot\beta$. Note if $\gamma$ is a
$\mathfrak{S}_{n}$-orbit in $M$ then the value
\[
\left\vert \alpha\right\vert =%
{\displaystyle\sum\nolimits_{i\in\mathbb{I}_{n}}}
\alpha(i)
\]
is independent of the choice of $\alpha$, hence we can speak of the value
$\left\vert \gamma\right\vert $. A $\mathfrak{S}_{n}$-orbit $\gamma$ will be
called $W$\emph{-minimal} if
\[
\rho\sim_{W}\gamma\Rightarrow\left\vert \gamma\right\vert \leq\left\vert
\rho\right\vert \text{.}%
\]

Given a $W$-minimal orbit $\gamma$ in $M$ we define a corresponding
$W$-submodule $S_{\gamma}$ of $\mathcal{P}$ given by the span of the set of
polynomials $\left\{  x^{\alpha}:\alpha\in\gamma\right\}  $.

\begin{example}
Consider the group $W=W(B_{3})$. In this case $\mathcal{P}=\mathbb{C}\left[
x,y,z\right]  $ and a multi-index is a triple $\left(  a,b,c\right)  $ of
non-negative integers representing the polynomial $x^{a}y^{b}z^{c}$. Write
$\left[  a,b,c\right]  $ for the corresponding $\mathfrak{S}_{3}$-orbit. Then
there are 10 different $W$-minimal orbits: $\left[  0,0,0\right]  $, $\left[
1,1,1\right]  $, $\left[  0,0,1\right]  $, $\left[  0,0,2\right]  $, $\left[
1,1,0\right]  ,$ $\left[  1,1,3\right]  $, $\left[  0,2,4\right]  $, $\left[
0,2,1\right]  $, $\left[  0,1,3\right]  $ and $\left[  1,3,5\right]  $.
\end{example}

Let $S_{\gamma}^{0}$ be the $W$-submodule of $S_{\gamma}$ defined by%

\[
S_{\gamma}^{0}=\left\{  P\in S_{\gamma}:\partial\left(  P\right)  =0\right\}
.
\]

\begin{theorem}
\label{3}If $\gamma$ is $W$-minimal then representation $S_{\gamma}^{0}$ is
irreducible and
\[
\mathcal{N}_{W}=%
{\displaystyle\bigoplus\limits_{\gamma\text{ }W\text{-minimal}}}
S_{\gamma}^{0}\text{.}%
\]

\end{theorem}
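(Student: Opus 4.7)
My plan is to proceed in four stages, following the structure of the statement. First, I would establish a coarse $W$-stable grading of the polynomial ring. Since the $W$-action sends each monomial $x^{\alpha}$ to a signed monomial $\pm x^{\pi\alpha}$ for $(\zeta,\pi)\in W$, each $S_{\gamma}$ is $W$-invariant and one obtains
\[
\mathcal{P}=\bigoplus_{\gamma}S_{\gamma}
\]
as $W$-modules, where the sum runs over all $\mathfrak{S}_{n}$-orbits $\gamma$ in $M$. This reduces the theorem to the study of $\mathcal{N}_{W}\cap S_{\gamma}$ one orbit at a time.

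Next, I would show that $\mathcal{N}_{W}\cap S_{\gamma}=0$ whenever $\gamma$ is not $W$-minimal. The strategy is to construct an explicit $D\in\mathcal{D}_{-}^{W}$ whose restriction to $S_{\gamma}$ is injective, built as a polynomial in the basic invariant differential operators $\sum_{i=1}^{n}\frac{\partial^{j}}{\partial x_{i}^{j}}$ (restricted to even $j$ in the $B_{n}$ case, to respect the parity condition in the definition of $W$-equivalence). The hypothesis that $\gamma$ is not $W$-minimal supplies exactly a bijection of value-sets strictly reducing $|\gamma|$, and this bijection translates into the choice of $D$. Conversely, for $W$-minimal $\gamma$ I would show $\mathcal{N}_{W}\cap S_{\gamma}=S_{\gamma}^{0}$: the inclusion ``$\subseteq$'' is immediate since $\partial\in\mathcal{D}_{-}^{W}$, while for ``$\supseteq$'' one argues that any other $D\in\mathcal{D}_{-}^{W}$ either factors through $\partial$ on $S_{\gamma}$ or maps $S_{\gamma}$ into some $S_{\gamma'}$ with $|\gamma'|<|\gamma|$ and $\gamma'\sim_{W}\gamma$, contradicting minimality.

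Finally, I would prove irreducibility of each $S_{\gamma}^{0}$. The conceptually cleanest route is a dimension count against the classification: $W$-minimal orbits should be in natural bijection with the irreducible representations of $W$ (partitions of $n$ for type $A_{n-1}$, bipartitions of $n$ for type $B_{n}$, and the analogous datum for $D_{2n+1}$). I would exhibit in each $S_{\gamma}^{0}$ a canonical Vandermonde-type element analogous to the determinants appearing in the $P_{\tau}$ construction above, and identify the cyclic $W$-module it generates with the expected irreducible by a character computation. A less explicit alternative: since $\mathcal{N}_{W}$ is known from \cite{aguado2} to contain a Gelfand model, matching $\dim\bigoplus_{\gamma}S_{\gamma}^{0}$ with $|\mathcal{I}|$ forces multiplicity-freeness and thus irreducibility of each summand. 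The principal obstacle lies in the earlier step of showing $\mathcal{N}_{W}\cap S_{\gamma}=S_{\gamma}^{0}$ for $W$-minimal $\gamma$: the algebra $\mathcal{D}_{-}^{W}$ contains many operators beyond $\partial$, notably products of invariant polynomials with higher power-sum differential operators, and showing that these impose no additional constraints on $\bigoplus_{\gamma}S_{\gamma}^{0}$ is precisely where the $W$-minimality condition does its essential combinatorial work.
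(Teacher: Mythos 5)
First, a point of calibration: the paper does not prove this theorem at all. It is stated with the sentence ``The following theorem is established in \cite{aguado1} and \cite{araujo1}'' and used as a black box (the type $D_{2n+1}$ analogue is likewise imported from \cite{araujo2}). So your proposal is not competing with an argument in this manuscript but with two earlier papers whose content it would have to reproduce. Judged on its own terms, the sketch identifies the right objects but has gaps at each of its stages. The opening reduction, $\mathcal{N}_{W}=\bigoplus_{\gamma}\left(  \mathcal{N}_{W}\cap S_{\gamma}\right)  $, is not automatic: the operators in $\mathcal{D}_{-}^{W}$ do not preserve the subspaces $S_{\gamma}$, and several distinct orbits can share the same total degree $\left\vert \gamma\right\vert $, so homogeneity of $\mathcal{N}_{W}$ does not by itself force it to split along the $S_{\gamma}$. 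This needs an argument (for instance via the degree-zero invariant operators $\sum_{i}x_{i}^{j}\partial_{i}^{j}$, whose joint eigenspaces refine the grading), and you do not give one.

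Second, the operators you propose to certify non-minimality --- polynomials in the power sums $\sum_{i}\partial_{i}^{j}$ --- form too small a family. The joint kernel of all constant-coefficient invariant operators of positive order is the space of $W$-harmonic polynomials, which is strictly larger than $\mathcal{N}_{W}$: already for $W=\mathfrak{S}_{3}$ the harmonics have a two-dimensional component in degree $2$, while $\mathcal{N}_{\mathfrak{S}_{3}}$ is concentrated in degrees $0,1,3$. The operators that actually implement the bijection $\varphi$ in the definition of $W$-equivalence are the mixed ones $\sum_{i}x_{i}^{a}\partial_{i}^{b}$ with $b>a$ (with parity constraints in type $B$), and controlling their action on the $S_{\gamma}$ is the substance of \cite{aguado1} and \cite{araujo1}. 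Relatedly, you yourself flag the inclusion $S_{\gamma}^{0}\subseteq\mathcal{N}_{W}$ for minimal $\gamma$ --- that no operator in $\mathcal{D}_{-}^{W}$ beyond $\partial$ imposes further conditions --- as ``the principal obstacle,'' but you offer no mechanism for it, and that inclusion together with irreducibility essentially \emph{is} the theorem. Finally, the dimension-count route only shows that $\mathcal{N}_{W}$ is multiplicity-free, hence that each $S_{\gamma}^{0}$ is a multiplicity-free sum of irreducibles; to conclude each is irreducible you must also match the number of $W$-minimal orbits with the number of irreducible characters, which you assert but do not establish. As it stands the proposal is a plausible roadmap rather than a proof; the course consistent with the paper is to cite \cite{aguado1} and \cite{araujo1}.
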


\begin{proof}
In the case of $W=W(A_{n-1})$ the theorem follows from \cite[Corollary 2.3 and
Theorem 4,2]{aguado1}. In the case of $W=W(B_{n})$ the result follows from
\cite[Corollary 2.4 and Theorem 4.4]{araujo1}. 
\end{proof}

\bigskip

After some initial preparation we will show when $\gamma$ is $W$-minimal then
$S_{\gamma}^{0}\cap\mathcal{D}_{\partial}\left(  \mathfrak{M}\right)
\neq\left\{  0\right\}  $. Let $I=\left\{  i_{1},i_{2},\ldots,i_{k}\right\}
\subseteq\mathbb{I}_{n}$ and define multi-indices $\iota,$ $\varphi$ and
$\kappa:I\rightarrow\mathbb{N}_{0}$ by
\[%
\begin{array}
[c]{cccc}%
\iota(i_{j})=j-1, & \varphi(i_{j})=2\left(  j-1\right)  & \text{and} &
\kappa(i_{j})=2j-1
\end{array}
.
\]
We introduce the groups $\mathfrak{S}\left(  I\right)  $ and $W\left(
B_{I}\right)  =\mathcal{C}_{2}^{k}\rtimes\mathfrak{S}\left(  I\right)  $. Note
that $\iota$ is $\mathfrak{S}(I)$-minimal and that $\varphi$ and $\kappa$ are
$W\left(  B_{I}\right)  $-minimal. We also introduce the restriction
$\partial_{I}$ of $\partial$ to $\mathbb{C}\left[  x_{i_{1}},\ldots,x_{i_{k}%
}\right]  $ given by
\[
\partial_{I}=\left\{
\begin{array}
[c]{ccc}%
\sum_{j=1}^{k}\frac{\partial}{\partial x_{i_{j}}} & \text{if} & W=W\left(
A_{n-1}\right) \\
\sum_{j=1}^{n}\frac{\partial^{2}}{\partial x_{i_{j}}^{2}} & \text{if} &
W=W\left(  B_{n}\right)
\end{array}
\right.  .
\]

\begin{lemma}
\label{4}i) Suppose $\alpha\in M$ and, if $W=W\left(  B_{n}\right)  $,
$\alpha$ takes the same parity on $I$. In that case, when $V\left(
\alpha,I\right)  $ has positive orientation then a power of $\partial_{I}$
transforms $V\left(  \alpha,I\right)  $ in a positive scalar multiple of
either $V\left(  \iota,I\right)  $, $V\left(  \varphi,I\right)  $ or $V\left(
\kappa,I\right)  $.

ii) $\partial_{I}\left(  V\left(  \iota,I\right)  \right)  =0$ and if
$W=W\left(  B_{n}\right)  $ then $\partial_{I}\left(  V\left(  \kappa
,I\right)  \right)  =0=\partial_{I}\left(  V\left(  \kappa,I\right)  \right)
$.
\end{lemma}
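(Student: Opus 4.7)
The plan is to handle (ii) first, since it supplies the final-step kernel elements needed in (i). For (ii): $V(\iota,I)=\prod_{p<q}(x_{i_q}-x_{i_p})$ depends only on differences of its variables and is therefore invariant under diagonal translations, of which $\partial_I=\sum_j\partial/\partial x_{i_j}$ is the infinitesimal generator; hence $\partial_I V(\iota,I)=0$ in type $A$. For type $B$, observe that $V(\varphi,I)=\prod_{p<q}(x_{i_p}^2-x_{i_q}^2)$ and $V(\kappa,I)=\bigl(\prod_j x_{i_j}\bigr)V(\varphi,I)$ are, up to sign, the products of positive roots of $W(D_k)$ and $W(B_k)$ acting on $\mathbb{R}^k$. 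Invariant theory tells us that the product of positive roots of a Euclidean reflection group is killed by every invariant constant-coefficient differential operator of positive degree, in particular by the Laplacian. A self-contained alternative is to set $y_j=x_{i_j}^2$ so that $V(\varphi,I)$ becomes the Vandermonde $V_Y=\prod_{p<q}(y_q-y_p)$, and the identity $\partial^2/\partial x_{i_j}^2=2\,\partial/\partial y_j+4y_j\,\partial^2/\partial y_j^2$ reduces the claim to $\sum_j\partial V_Y/\partial y_j=0$ (translation invariance in $y$) and $\sum_j y_j\,\partial^2 V_Y/\partial y_j^2=0$, the latter a short antisymmetry check.

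For (i) in type $A$, I would use the Jacobi bialternant formula. If $V(\alpha,I)$ has positive orientation with sorted values $a_1<\cdots<a_k$, then $\lambda_p:=a_{k+1-p}-(k-p)$ defines a partition $\lambda$ of length at most $k$ satisfying
\[
V(\alpha,I)=s_\lambda(x_{i_1},\ldots,x_{i_k})\cdot V(\iota,I),\qquad |\lambda|=\sum_p a_p-\binom{k}{2}.
\]
Since $\partial_I$ is a first-order derivation annihilating $V(\iota,I)$ by (ii), iteration yields $\partial_I^N V(\alpha,I)=(\partial_I^N s_\lambda)\,V(\iota,I)$ for every $N$. Expanding $s_\lambda=\sum_T x^{c(T)}$ over semistandard Young tableaux $T$ of shape $\lambda$ on $\{1,\ldots,k\}$, the multinomial theorem gives $\partial_I^{|\lambda|}x^{c(T)}=|\lambda|!$ for every $T$; summing yields $\partial_I^{|\lambda|}s_\lambda=|\lambda|!\cdot\#\mathrm{SSYT}(\lambda,k)$, which is strictly positive.

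For (i) in type $B$, the same bialternant in $y_j=x_{i_j}^2$ gives $V(\alpha,I)=V_Y(d,I)$ when $\alpha$ is all even and $V(\alpha,I)=\bigl(\prod_j x_{i_j}\bigr)V_Y(d,I)$ when $\alpha$ is all odd, where $d_1<\cdots<d_k$ are the sorted half-values of $\alpha$ (respectively of $\alpha-1$). Since $\Delta$ is second order rather than a derivation, one cannot factor the ground Vandermonde out, so I would argue by induction on $|d|-\binom{k}{2}$. A direct computation using the $y$-variable identity above gives
\[
\Delta V_Y(d,I)=\sum_p 2d_p(2d_p-1)\,V_Y(d^{(p)},I),
\]
where $d^{(p)}$ is $d$ with $d_p$ replaced by $d_p-1$ and the sum ranges over those $p$ for which $d^{(p)}$ remains strictly increasing (equivalently, either $p=1,\,d_1\geq 1$, or $p\geq 2,\,d_p>d_{p-1}+1$). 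Every coefficient is nonnegative, and if $d\neq(0,1,\ldots,k-1)$ then the smallest valid $p$ contributes strictly positively, so induction produces a positive multiple of the minimal shifted Vandermonde, which is $V(\varphi,I)$. The all-odd case is handled identically after factoring $\prod_j x_{i_j}$: one computes $\Delta\bigl(\prod_j x_{i_j}\cdot V_Y(d,I)\bigr)=\prod_j x_{i_j}\cdot\sum_p 2d_p(2d_p+1)V_Y(d^{(p)},I)$ and concludes with $V(\kappa,I)$.

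The main obstacle is the positivity bookkeeping in type $B$. The type $A$ argument works because $\partial_I$ is a derivation, allowing $V(\iota,I)$ to be pulled cleanly out of the product; the Laplacian is second order and generates cross terms when acting on $s_\lambda(y)\cdot V(\varphi,I)$, so the analogous type $A$ calculation does not go through verbatim. The workaround is to avoid the Schur factorization in the reduction step and instead compute $\Delta$ directly on the shifted Vandermonde family, obtaining a Pieri-type expansion with manifestly nonnegative coefficients, whose strict positivity is ensured at each inductive step by the combinatorial gap condition on $d$.
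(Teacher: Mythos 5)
Your argument is correct, but only the type-$B$ half of part (i) follows the paper's route. The paper proves everything with one computation: writing $V(\alpha,I)=\sum_{\pi\in\mathfrak{S}(I)}sgn(\pi)\,\pi x^{\alpha}$ and using the $\mathfrak{S}(I)$-invariance of $\partial_I$, it obtains $\partial_I V(\alpha,I)=\sum_{p}\lambda_p V(\alpha^{p},I)$, where $\alpha^{p}$ lowers the $p$-th exponent by $1$ (type $A$) or $2$ (type $B$) and $\lambda_p=\alpha(i_p)$ or $\alpha(i_p)(\alpha(i_p)-1)$; part (i) is then induction on $\left\vert\alpha\right\vert$, since every surviving term is positively oriented of strictly smaller degree, and part (ii) is the observation that for $\iota,\varphi,\kappa$ each $\alpha^{p}$ is either non-injective or carries coefficient zero. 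Your type-$B$ computation is exactly this expansion transported to the variables $y_j=x_{i_j}^2$; the one thing you add is an explicit identification of which indices $p$ contribute strictly positively (the gap condition on $d$), which makes precise a point the paper leaves implicit, namely that the iteration cannot terminate in $0$ before reaching the minimal alternant. Your type-$A$ argument is genuinely different: factoring $V(\alpha,I)=s_\lambda\cdot V(\iota,I)$ and using that $\partial_I$ is a derivation annihilating $V(\iota,I)$ yields the exact constant $\left\vert\lambda\right\vert!\cdot\#\mathrm{SSYT}(\lambda,k)$ in one step, which is sharper than the paper's unspecified positive multiple; this factorization trick is unavailable for the second-order operator, as you correctly note. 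Your proof of (ii) by translation invariance plus the degree/antisymmetry argument for $\sum_j y_j\partial_{y_j}^2V_Y=0$ is also sound and independent of the paper's. The only small point worth making explicit in both your induction and the paper's is that the distinct nonzero alternants $V(\alpha^{p},I)$ are linearly independent, so a nonnegative combination with at least one strictly positive coefficient cannot vanish.
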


\begin{proof}
\emph{i)}\ Assume $V\left(  \alpha,I\right)  $ has positive orientation
\ Write $\alpha^{p}=\left(  \alpha(i_{1}),\ldots,\alpha(i_{p})-2,\ldots
,\alpha(i_{k})\right)  $ in the case $W=W\left(  B_{n}\right)  $ and
$\alpha^{p}=\left(  \alpha(i_{1}),\ldots,\alpha(i_{p})-1,\ldots,\alpha
(i_{k})\right)  $ in the case $W=\mathfrak{S}_{n}$. Since $\partial_{I}$ is
$W_{I}$-invariant, we have
\begin{align*}
\partial_{I}\left(  V\left(  \alpha,I\right)  \right)   &  =\partial_{I}%
\sum_{\pi\in\mathfrak{S}(I)}sg\left(  \pi\right)  \pi x^{\alpha}=\sum_{\pi
\in\mathfrak{S}(I)}sg\left(  \pi\right)  \pi\partial_{I}\left(  x^{\alpha
}\right) \\
&  =\sum_{\pi\in\mathfrak{S}(I)}sg\left(  \pi\right)  \pi\left(  \sum
_{p=1}^{k}\lambda_{p}x^{\alpha^{p}}\right) \\
&  =\sum_{p=1}^{k}\lambda_{p}\sum_{\pi\in\mathfrak{S}(I)}sg\left(  \pi\right)
\pi x^{\alpha^{p}}\\
&  =\sum_{p=1}^{k}\lambda_{p}V\left(  \alpha^{p},I\right)
\end{align*}
where $\lambda_{p}=\alpha(i_{p})(\left(  \alpha(i_{p})-1\right)  $ if
$W=W\left(  B_{n}\right)  $ or $\lambda_{p}=\alpha(i_{p})$ if $W=\mathfrak{S}%
_{n}$. In the terms where $V\left(  \alpha^{p},I\right)  \neq0$, is clear that
$V\left(  \alpha^{p},I\right)  $ has positive orientation and $\left\vert
\alpha^{p}\right\vert =\left\vert \alpha\right\vert -1$ if $W=\mathfrak{S}%
_{n}$ or $\left\vert \alpha^{p}\right\vert =\left\vert \alpha\right\vert -2$
if $W=W\left(  B_{n}\right)  $. Now the proof follows by induction on the
$\left\vert \alpha\right\vert =\sum_{j=1}^{k}\alpha\left(  i_{j}\right)  $.

\emph{ii) }By \emph{i)} we have:%
\[
\partial_{I}\left(  V\left(  \alpha,I\right)  \right)  =\sum_{p=1}^{k}%
\lambda_{p}V\left(  \alpha^{p},I\right)
\]
but $\alpha^{p}$ is not injective when $\alpha=\iota,$ $\varphi$ or $\kappa$
so $V\left(  \alpha^{p},I_{p}\right)  =0$, $1\leq p\leq k$, in these cases.
\end{proof}

\begin{corollary}
\label{5}Let $\mathbb{I}_{n}=\cup_{j=1}^{h}I_{j}$ be a partition of
$\mathbb{I}_{n}$ and suppose $\alpha\in M$ is injective. Let $\alpha_{j}$
denote the restriction of $\alpha$ to the set $I_{j}$. Assume (as before) that
$\alpha_{j}$ takes the same parity on $I_{j}$ if $W=W\left(  B_{n}\right)  $.
Then there exists $m\in\mathbb{N}_{0}$ such that:%
\[
\partial^{m}\left(
{\displaystyle\prod\limits_{j=1}^{h}}
V\left(  \alpha_{j},I_{j}\right)  \right)  =q%
{\displaystyle\prod\limits_{j=1}^{h}}
V\left(  \beta_{j},I_{j}\right)
\]
where $\beta_{j}=\iota,\varphi$ or $\kappa$ on $I_{j}$ as previously defined
and where $q\in\mathbb{N}$.
\end{corollary}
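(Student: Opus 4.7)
The plan is to exploit the disjointness of the index sets $I_j$ and reduce the corollary to $h$ independent applications of Lemma \ref{4}. Since the $I_j$ partition $\mathbb{I}_n$, the operator splits as $\partial=\sum_{j=1}^{h}\partial_{I_j}$, with summands on pairwise disjoint variables. In particular the $\partial_{I_j}$ pairwise commute, and each $\partial_{I_j}$ treats every factor $V(\alpha_k,I_k)$ with $k\neq j$ as a constant (in the type $B$ case the second-order Leibniz cross terms also vanish, since no variable is shared between factors). This yields the key identity
\[
\prod_{j=1}^{h}\partial_{I_j}^{\,n_j}\!\left(\prod_{k=1}^{h} V(\alpha_k, I_k)\right)=\prod_{j=1}^{h}\partial_{I_j}^{\,n_j}V(\alpha_j,I_j),
\]
valid for any tuple $(n_1,\ldots,n_h)$ of non-negative integers.

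Next I would apply Lemma \ref{4} to each factor separately. Assuming, after absorbing an overall sign, that every $V(\alpha_j,I_j)$ has positive orientation, part (i) of that lemma produces an integer $m_j\in\mathbb{N}_0$ and a positive integer $c_j$ with $\partial_{I_j}^{m_j}V(\alpha_j,I_j)=c_j\,V(\beta_j,I_j)$, where $\beta_j\in\{\iota,\varphi,\kappa\}$ is forced by the parity hypothesis on $\alpha_j$ (and is simply $\iota$ in type $A$). That $c_j$ is actually a positive integer, and not merely a positive scalar, follows by inspecting the recursion in the proof of Lemma \ref{4}(i), whose coefficients $\lambda_p$ are all non-negative integers. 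Part (ii) of the lemma then gives $\partial_{I_j}^{\,n_j}V(\alpha_j,I_j)=0$ whenever $n_j>m_j$.

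Finally, set $m=m_1+\cdots+m_h$ and expand
\[
\partial^{m}=\Bigl(\sum_{j=1}^{h}\partial_{I_j}\Bigr)^{\!m}=\sum_{n_1+\cdots+n_h=m}\binom{m}{n_1,\ldots,n_h}\prod_{j=1}^{h}\partial_{I_j}^{\,n_j}
\]
by the multinomial theorem, which is valid because the $\partial_{I_j}$ commute. Applying this to $\prod_k V(\alpha_k,I_k)$ and using the displayed identity, each tuple $(n_1,\ldots,n_h)$ with $\sum n_j=m$ contributes a term that vanishes as soon as some $n_j>m_j$; and if $n_j\leq m_j$ for every $j$, then $\sum n_j=\sum m_j$ forces $n_j=m_j$ throughout. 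Only the tuple $(m_1,\ldots,m_h)$ survives, producing the claimed equality with $q=\binom{m}{m_1,\ldots,m_h}\prod_{j=1}^{h}c_j\in\mathbb{N}$. The one point requiring any real care is the non-derivation character of $\partial$ in type $B$, but this is neutralised at the outset by the disjointness of the $I_j$; the rest is a clean commutativity-plus-pigeonhole calculation.
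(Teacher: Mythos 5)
Your proposal is correct and follows essentially the same route as the paper: decompose $\partial=\sum_j\partial_{I_j}$ over the disjoint blocks, apply Lemma \ref{4} to each factor to get the exponents $m_j$, and observe via the multinomial expansion that only the tuple $(m_1,\ldots,m_h)$ survives. Your version is marginally more explicit about the multinomial coefficient, the integrality of the constants, and the orientation normalization, but the argument is the same.
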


\begin{proof}
The operator $\partial$ can be decomposed as $\partial=\sum_{j=1}^{h}%
\partial_{j}\,$with%
\[
\partial_{j}=\left\{
\begin{array}
[c]{ccc}%
\sum_{k\in I_{j}}\frac{\partial}{\partial x_{k}} & \text{if} & W=\mathfrak{S}%
_{n}\\
\sum_{k\in I_{j}}\frac{\partial^{2}}{\partial x_{k}^{2}} & \text{if} &
W=W\left(  B_{n}\right)
\end{array}
\right.  .
\]
From the previous lemma there exists $m_{j}\in\mathbb{N}_{0}$ such that
$\partial_{j}^{m_{j}}V\left(  \alpha_{j},I_{j}\right)  =q_{j}V\left(
\beta_{j},I_{j}\right)  $ where $\beta_{j}=\iota,\varphi$ or $\kappa$ as
defined on $I_{j}$ and $q_{j}\in\mathbb{N}$. Let $m=\sum_{j=1}^{h}m_{j}$. Then
we have%
\[
\partial^{m}=\left(  \sum_{j=1}^{h}\partial_{j}\right)  ^{m}=\sum_{\left\vert
\delta\right\vert =m}c_{\delta}%
{\displaystyle\prod\limits_{j=1}^{h}}
\partial_{j}^{\delta_{j}}%
\]
so that
\[
\partial^{m}\left(
{\displaystyle\prod\limits_{j=1}^{h}}
V\left(  \alpha_{j},I_{j}\right)  \right)  =\sum_{\left\vert \delta\right\vert
=m}c_{\delta}\left(
{\displaystyle\prod\limits_{j=1}^{h}}
\partial_{j}^{\delta_{j}}V\left(  \beta_{j},I_{j}\right)  \right)  .
\]
Take $\delta$ such that $\left\vert \delta\right\vert =m$. If $\delta
_{j}>m_{j}$ for some index $j$, then $\partial_{j}^{\delta_{j}}V\left(
\alpha_{j},I_{j}\right)  =0$. If $\delta_{j}<m_{j}$ for some index $j$ then
there exists $i$ such that $\delta_{i}>m_{i}$ and $\partial_{i}^{\delta_{i}%
}V\left(  \alpha_{i},I_{i}\right)  =0$. It follows that the only non-zero term
in the in the right-hand side of the previous equality corresponds to
$\delta=\left(  m_{1},\ldots,m_{h}\right)  $. Hence
\[
\partial^{m}\left(
{\displaystyle\prod\limits_{j=1}^{h}}
V\left(  \alpha_{j},I_{j}\right)  \right)  =q\left(
{\displaystyle\prod\limits_{j=1}^{h}}
V\left(  \beta_{j},I_{j}\right)  \right)  .
\]

\end{proof}

\begin{lemma}
\label{6}Let $\gamma$ be a $W$-minimal orbit in $M$. Then $S_{\gamma}%
^{0}\subseteq\mathcal{D}_{\partial}\left(  \mathfrak{M}\right)  $.
\end{lemma}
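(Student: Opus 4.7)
First I would exploit the fact that $\mathcal{D}_{\partial}(\mathfrak{M})$ is a $W$-submodule of $\mathcal{P}$, since $\partial$ commutes with the action of $W$ and $\mathfrak{M}$ is $W$-invariant. By Theorem \ref{3}, $S_{\gamma}^{0}$ is irreducible, hence $S_{\gamma}^{0}\cap\mathcal{D}_{\partial}(\mathfrak{M})$ is either $\{0\}$ or all of $S_{\gamma}^{0}$. It therefore suffices to produce a single non-zero element of the intersection.

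To construct such an element I would fix a representative $\alpha\in\gamma$ and partition $\mathbb{I}_{n}=I_{1}\cup\cdots\cup I_{h}$ so that $\alpha$ is injective on each $I_{j}$ and, when $W=W(B_{n})$, the values of $\alpha|_{I_{j}}$ all share a single parity. On each block I would pick an involution $\tau^{(j)}\in\mathcal{I}$ supported on $I_{j}$ whose associated multi-index $\alpha_{\tau^{(j)}}$ lies in the $W|_{I_{j}}$-equivalence class of $\alpha|_{I_{j}}$ and whose entries dominate those of $\alpha|_{I_{j}}$ in the sense needed by Lemma \ref{4}(i). Because the supports $I_{j}$ are disjoint, the combined involution $\tau=\prod_{j}\tau^{(j)}$ belongs to $\mathcal{I}$ and $P_{\tau}=\prod_{j}P_{\tau^{(j)}}$ sits in $\mathfrak{M}$. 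Applying $\partial^{m}$ for $m=\sum_{j}m_{j}$ and invoking Corollary \ref{5} block by block, I obtain $\partial^{m}P_{\tau}=q\prod_{j}V(\beta_{j},I_{j})$ with $q>0$ and each $\beta_{j}\in\{\iota,\varphi,\kappa\}$. Each $V(\beta_{j},I_{j})$ is killed by the block operator $\partial_{I_{j}}$ by Lemma \ref{4}(ii); because the $I_{j}$ have disjoint variable support, the cross terms $\partial_{a}V(\beta_{j},I_{j})\cdot\partial_{a}V(\beta_{j'},I_{j'})$ with $j\neq j'$ vanish identically, so $\partial$ annihilates the full product. Since all monomials appearing share the multiplicity profile of $\alpha$, the product lies in $S_{\gamma}$, and therefore in $S_{\gamma}^{0}\cap\mathcal{D}_{\partial}(\mathfrak{M})$.

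The main obstacle is arranging the partition and the cycle types of the $\tau^{(j)}$ so that the descent via Corollary \ref{5} arrives precisely at the target orbit $\gamma$ rather than at a coarser one. Concretely, even-parity blocks of $\alpha$ should come from $Q_{2}$-supported (positive-cycle) transpositions of $\tau$ so that the descent lands on $V(\varphi,\cdot)$; odd-parity blocks should come from $L_{2}$-supported (negative-cycle) transpositions so that the descent lands on $V(\kappa,\cdot)$; and singleton blocks should be absorbed by the $L_{1}$- or $Q_{1}$-parts of $\tau$ according to the value $\alpha$ takes there. Once these combinatorial choices are calibrated to the $W$-minimality of $\gamma$, Corollary \ref{5} produces the required non-zero element and the irreducibility of $S_{\gamma}^{0}$ closes the argument.
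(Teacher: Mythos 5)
Your overall strategy coincides with the paper's: reduce, via the irreducibility of $S_{\gamma}^{0}$ (Theorem \ref{3}) and the fact that $\mathcal{D}_{\partial}\left(  \mathfrak{M}\right)  $ is a $W$-submodule, to exhibiting one nonzero element of $S_{\gamma}^{0}\cap\mathcal{D}_{\partial}\left(  \mathfrak{M}\right)  $; then build an involution $\tau$ adapted to the block decomposition of a minimal representative $\alpha$ and descend by a power of $\partial$ using Corollary \ref{5}. But there is a genuine gap at the central step. You apply Corollary \ref{5} to $P_{\tau}$ as if it were the product of determinants $\prod_{j}V\left(  \alpha_{j},I_{j}\right)  $. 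It is not: by definition $P_{\tau}=\Omega_{\vartheta_{\tau}}x^{\alpha_{\tau}}$ is antisymmetrized only over the centralizer $\vartheta_{\tau}=\vartheta_{\tau}^{+}\times\vartheta_{\tau}^{-}$ of $\tau^{+}$ and $\tau^{-}$ in $\mathfrak{S}\left(  Q_{2}^{\tau}\right)  \times\mathfrak{S}\left(  L_{2}^{\tau}\right)  $, a proper subgroup that does not contain the full symmetric groups $\mathfrak{S}\left(  I_{j}\right)  $ of the blocks, whereas Corollary \ref{5} is a statement about products of the fully antisymmetrized polynomials $V\left(  \alpha_{j},I_{j}\right)  $. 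Relatedly, the identity $P_{\tau}=\prod_{j}P_{\tau^{(j)}}$ is false in general: $\vartheta_{\tau}^{+}$ permutes the transpositions of $\tau^{+}$ across different blocks, and the exponents $\alpha_{\tau}$ are assigned by a global enumeration of the pairs, so they do not restrict to the block-wise multi-indices of the $\tau^{(j)}$.

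The paper closes exactly this gap by inserting an averaging step before differentiating: it applies $\Omega_{\mathcal{T}_{\gamma}}$, where $\mathcal{T}_{\gamma}$ is a system of coset representatives of $\vartheta_{\gamma}$ in $\mathfrak{S}_{\gamma}=\prod_{j}\mathfrak{S}\left(  I_{j}\right)  $. Writing $\Omega_{\vartheta_{\tau}}=\sum_{\eta\in\mathfrak{H}}\Omega_{\vartheta_{\gamma}}\eta$ for a set $\mathfrak{H}$ of double permutations, one obtains
\[
\Omega_{\mathcal{T}_{\gamma}}P_{\tau}=\sum_{\eta\in\mathfrak{H}}\prod_{j}V\left(  \left(  \eta\cdot\alpha_{\tau}\right)  _{j},I_{j}\right)  ,
\]
a genuine sum of products of determinants, all with positive orientation because the $\eta$ are double permutations; Corollary \ref{5} then applies term by term with coefficients in $\mathbb{N}$, so there is no cancellation and $\partial^{m}\left(  \Omega_{\mathcal{T}_{\gamma}}P_{\tau}\right)  =q\prod_{j}V\left(  \alpha_{j},I_{j}\right)  $ with $q>0$. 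The element of $\mathfrak{M}$ witnessing the nontrivial intersection is therefore $\Omega_{\mathcal{T}_{\gamma}}P_{\tau}$ (which lies in $\mathfrak{M}$ since $\mathfrak{M}$ is $\mathfrak{S}_{n}$-stable), not $P_{\tau}$ itself. Your reduction to a single nonzero element, your observation that minimality forces each $\alpha_{j}$ to be $\iota$, $\varphi$ or $\kappa$, and your verification that $\prod_{j}V\left(  \beta_{j},I_{j}\right)  $ is killed by $\partial$ and lies in $S_{\gamma}$ are all correct and match the paper; what is missing is this completion of the antisymmetrization and the positivity bookkeeping that makes the descent land on a nonzero multiple of the target.
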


\begin{proof}
The idea of the proof is to produce a nontrivial element in $S_{\gamma}%
^{0}\cap\mathcal{D}_{\partial}\left(  \mathfrak{M}\right)  $. Suppose
$\alpha\in\gamma$. Observe that there is a decomposition of $\mathbb{I}%
_{n}=\cup_{j=1}^{h}I_{j}$ such that:

\emph{i)} The restrictions $\alpha_{j}$ of $\alpha$ to $I_{j}$ are injective;

\emph{ii)} If $W=W\left(  B_{n}\right)  $, $\alpha_{j}$ takes the same parity
on $I_{j}$;

\emph{iii)} Each $I_{j}$ is maximal with the properties \emph{i) }and
\emph{ii)}.

\noindent\noindent Since $\gamma$ is a $W$-minimal orbit, it follows that
every $\alpha_{j}$ is of the form $\iota$, $\varphi$ or $\kappa$, so by Lemma
\ref{4} and Corollary \ref{5} we have
\[%
{\displaystyle\prod\limits_{j=1}^{h}}
V\left(  \alpha_{j},I_{j}\right)  \in S_{\gamma}^{0}.
\]

We will show that the polynomial $%
{\displaystyle\prod\limits_{j=1}^{h}}
V\left(  \alpha_{j},I_{j}\right)  $ is in the telescopic decomposition of
$\mathfrak{M}$. Since $%
{\displaystyle\prod\limits_{j=1}^{h}}
V\left(  \alpha_{j},I_{j}\right)  $ is in the space $S_{\gamma}^{0}$ this
means it is in the kernel of $\partial$. Therefore it suffices to prove that
the polynomial in question is in the image of a power of $\partial$ applied to
$\mathfrak{M}$. Put $I_{j}=\left\{  i_{1},i_{2},\ldots,i_{m_{j}}\right\}  $.
When $W\cong\mathfrak{S}_{n}$ define
\begin{align*}
\tau_{j}  &  =\left\{
\begin{array}
[c]{ccc}%
\left(  i_{1},i_{2}\right)  \cdots\left(  i_{m_{j}-1},i_{m_{j}}\right)  &
\text{if} & m_{j}\text{ is even}\\
\left(  i_{1},i_{2}\right)  \cdots\left(  i_{m_{j}-2},i_{m_{j}-1}\right)  &
\text{if} & m_{j}>1\text{ is odd}%
\end{array}
\right. \\
&  \text{and}\\
&
\begin{array}
[c]{ccc}%
\tau_{j}=\boldsymbol{I} & \text{if} & m_{j}=1
\end{array}
.
\end{align*}
When $W=W(B_{n})$ we first define the element $\zeta^{I_{j}}\in\mathcal{C}%
_{2}^{n}$ by
\[
\zeta_{k}^{I_{j}}=\left\{
\begin{array}
[c]{ccc}%
-1 & \text{if} & k\in I_{j}\\
1 & \text{if} & k\notin I_{j}%
\end{array}
\right.  .
\]
Then define
\begin{align*}
\tau_{j}  &  =\left\{
\begin{array}
[c]{ccc}%
(\boldsymbol{I},\left(  i_{1},i_{2}\right)  \cdots\left(  i_{m_{j}-1}%
,i_{m_{j}}\right)  ) & \text{if} & m_{j}\text{ is even and }\alpha\text{ is
even on }I_{j}\\
\left(  \zeta^{I_{j}},\left(  i_{1},i_{2}\right)  \cdots\left(  i_{m_{j}%
-1},i_{m_{j}}\right)  \right)  & \text{if} & m_{j}\text{ is even and }%
\alpha\text{ is odd on }I_{j}\\
\left(  \boldsymbol{I},\left(  i_{1},i_{2}\right)  \cdots\left(  i_{m_{j}%
-2},i_{m_{j}-1}\right)  \right)  & \text{if} & m_{j}>1\text{ is odd and
}\alpha\text{ is even on }I_{j}\\
\left(  \zeta^{I_{j}},\left(  i_{1},i_{2}\right)  \cdots\left(  i_{m_{j}%
-2},i_{m_{j}-1}\right)  \right)  & \text{if} & m_{j}>1\text{ is odd and
}\alpha\text{ is odd on }I_{j}%
\end{array}
\right. \\
&  \text{and}\\
\tau_{j}  &  =\left\{
\begin{array}
[c]{ccc}%
\boldsymbol{I} & \text{if} & m_{j}=1\text{ and }\alpha\text{ is even on }%
I_{j}\\
\left(  \zeta^{I_{j}},\boldsymbol{I}\right)  & \text{if} & m_{j}=1\text{ and
}\alpha\text{ is odd on }I_{j}%
\end{array}
\right.  .
\end{align*}
Let $\tau\in W$ be the product of the involutions $\tau_{j}$ and let
$\vartheta_{j}$ be the centralizer of $\tau_{j}$ in $\mathfrak{S}%
_{j}=\mathfrak{S}\left(  I_{j}\right)  $. Suppose $\mathcal{T}_{j}$ is a
system of representatives of right cosets of $\vartheta_{j}$ in $\mathfrak{S}%
_{j}$. We put
\begin{align*}
\mathfrak{S}_{\gamma}  &  =\mathfrak{S}_{1}\times\cdots\times\mathfrak{S}%
_{m}\\
\vartheta_{\gamma}  &  =\vartheta_{1}\times\cdots\times\vartheta_{m}\\
\mathcal{T}_{\gamma}  &  =\mathcal{T}_{1}\times\cdots\times\mathcal{T}%
_{m}\text{.}%
\end{align*}
Let $\vartheta_{\tau}=\vartheta_{\tau}^{+}\times\vartheta_{\tau}^{-}$ as
before and let $\mathfrak{H}$ denote a set of double permutations that give a
system of representatives of left cosets of $\vartheta_{\gamma}$ in
$\vartheta_{\tau}$. According to the definition of $P_{\tau}$ we have
\[
\Omega_{\mathcal{T}_{\gamma}}P_{\tau}=\Omega_{\mathcal{T}_{\gamma}}%
\Omega_{\mathcal{\vartheta}_{\tau}}x^{\alpha_{\tau}}\text{.}%
\]
Decomposing
\[
\Omega_{\mathcal{\vartheta}_{\tau}}=\sum_{\eta\in\mathfrak{H}}\Omega
_{\mathcal{\vartheta}_{\gamma}}\eta
\]
we have%
\begin{align*}
\Omega_{\mathcal{T}_{\gamma}}P_{\tau}  &  =\Omega_{\mathcal{T}_{\gamma}%
}\left(  \sum_{\eta\in\mathfrak{H}}\Omega_{\mathcal{\vartheta}_{\gamma}}%
\eta\right)  x^{\alpha_{\tau}}\\
&  =\sum_{\eta\in\mathfrak{H}}\Omega_{\mathfrak{S}_{\gamma}}\eta
x^{\alpha_{\tau}}\\
&  =\sum_{\eta\in\mathfrak{H}}%
{\displaystyle\prod\limits_{j=1}^{n}}
V\left(  \left(  \eta\cdot\alpha_{\tau}\right)  _{j},I_{j}\right)  \text{.}%
\end{align*}
Since the permutations $\eta$ are double permutations all factors $V\left(
\left(  \eta\cdot\alpha_{\tau}\right)  _{j},I_{j}\right)  $ have positive
orientation. Now, by Corollary \ref{5}, there is a power $m$ of $\partial$
such that%
\[
\partial^{m}\left(  \sum_{\eta\in\mathfrak{H}}%
{\displaystyle\prod\limits_{j=1}^{n}}
V\left(  \left(  \eta\cdot\delta_{\tau}\right)  _{j},I_{j}\right)  \right)  =q%
{\displaystyle\prod\limits_{j=1}^{h}}
V\left(  \alpha_{j},I_{j}\right)
\]
where $q\in\mathbb{N}$.
\end{proof}

\begin{theorem}
Suppose $W=W\left(  A_{n-1}\right)  $ or $W=W\left(  B_{n}\right)  $ and
$\mathcal{M}$ is the Gelfand model defined in the previous section. Let
$\phi:\mathcal{M}\rightarrow\mathcal{P}=\mathbb{C}\left[  x_{1},\ldots
,x_{n}\right]  $ be the $W$-morphism defined by
\[
e_{\tau}\mapsto P_{\tau}%
\]
and put $\mathfrak{M}=\phi\left(  \mathcal{M}\right)  $. Let
\[
\partial:\mathcal{P}\rightarrow\mathcal{P}%
\]
be the $W$-invariant differential operator previously defined in this section.
Then $\mathcal{N}_{W}=\mathcal{D}_{\partial}\left(  \mathfrak{M}\right)  $,
that is: the polynomial model for $W$ is the telescopic decomposition of
$\mathfrak{M}$ with respect to $\partial$. In particular we have isomorphisms
\[
\mathcal{M}\overset{\phi}{\cong}\mathfrak{M}\cong\mathcal{D}_{\partial}\left(
\mathfrak{M}\right)  =\mathcal{N}_{W}\text{.}%
\]

\end{theorem}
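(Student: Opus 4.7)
The plan is to package the preceding lemmas into a short containment-plus-dimension argument. I would first establish the inclusion $\mathcal{N}_{W}\subseteq\mathcal{D}_{\partial}(\mathfrak{M})$, and then force equality by comparing dimensions, using the fact that $\mathcal{N}_W$ is already known to be a Gelfand model and that $\mathfrak{M}$ is a homomorphic image of the Gelfand model $\mathcal{M}$.

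First, I would invoke Theorem \ref{3} to write $\mathcal{N}_W=\bigoplus_{\gamma \text{ }W\text{-minimal}} S_\gamma^0$. By Lemma \ref{6}, each summand $S_\gamma^0$ is contained in $\mathcal{D}_\partial(\mathfrak{M})$, so the whole direct sum lies in $\mathcal{D}_\partial(\mathfrak{M})$. This gives the inclusion $\mathcal{N}_W\subseteq\mathcal{D}_\partial(\mathfrak{M})$ as $W$-modules.

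Next, I would carry out the dimension bookkeeping. By the very definition of telescopic decomposition, the map $\partial$ provides a $W$-isomorphism $\mathfrak{M}\cong\mathcal{D}_\partial(\mathfrak{M})$, so
\[
\dim_{\mathbb{C}}\mathcal{D}_\partial(\mathfrak{M})=\dim_{\mathbb{C}}\mathfrak{M}\leq\dim_{\mathbb{C}}\mathcal{M}=|\mathcal{I}|,
\]
the last equality coming from part (i) of the proposition on $\mathcal{M}$. On the other hand, as recalled in the introduction to this section, for $W=W(A_{n-1})$ or $W=W(B_n)$ the polynomial model $\mathcal{N}_W$ is itself a Gelfand model for $W$, hence $\dim_{\mathbb{C}}\mathcal{N}_W=|\mathcal{I}|$. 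Putting the chain together,
\[
|\mathcal{I}|=\dim_{\mathbb{C}}\mathcal{N}_W\leq\dim_{\mathbb{C}}\mathcal{D}_\partial(\mathfrak{M})\leq\dim_{\mathbb{C}}\mathfrak{M}\leq|\mathcal{I}|,
\]
so all inequalities are equalities. The equality of dimensions upgrades the inclusion $\mathcal{N}_W\subseteq\mathcal{D}_\partial(\mathfrak{M})$ to $\mathcal{N}_W=\mathcal{D}_\partial(\mathfrak{M})$, and the equality $\dim\mathfrak{M}=|\mathcal{I}|=\dim\mathcal{M}$ forces $\phi$ to be injective, yielding the chain of isomorphisms $\mathcal{M}\stackrel{\phi}{\cong}\mathfrak{M}\cong\mathcal{D}_\partial(\mathfrak{M})=\mathcal{N}_W$.

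The only real content of the theorem sits in Lemma \ref{6}, which was the main obstacle and is already in hand; the remainder is precisely this clean dimension count. For the $W(D_{2n+1})$ version announced at the end of the excerpt, the same template should apply verbatim once $\partial$, $P_\tau$ and the notion of $W$-minimality are adjusted so that the analogues of Lemma \ref{4}, Corollary \ref{5} and Lemma \ref{6} go through; the paper indicates this adjustment will be made separately.
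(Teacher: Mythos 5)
Your proposal is correct and follows essentially the same route as the paper: the inclusion $\mathcal{N}_{W}\subseteq\mathcal{D}_{\partial}\left(\mathfrak{M}\right)$ via Theorem \ref{3} and Lemma \ref{6}, then equality by comparing dimensions using the fact that both sides are Gelfand models of dimension $\left\vert\mathcal{I}\right\vert$. Your version merely spells out the dimension bookkeeping (and the injectivity of $\phi$) a little more explicitly than the paper does.
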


\begin{proof}
By the Lemma \ref{6} it follows that $S_{\gamma}^{0}\subseteq\mathfrak{M}$ for
every $W$-minimal $\gamma\in M$. From Theorem \ref{3} it follows that
$\mathcal{N}_{W}\subseteq\mathfrak{M=\phi}\left(  \mathcal{M}\right)  $. But
$\dim_{\mathbb{C}}\left(  \mathcal{N}_{W}\right)  =\dim_{\mathbb{C}}\left(
\mathcal{M}\right)  $ because both are Gelfand models.
\end{proof}

\bigskip

We now consider a simple modification of the above construction that gives an
isomorphism for the group $W\left(  D_{2n+1}\right)  $. The trick is to
replace some of the spaces $\mathcal{M}_{\tau}$ for $\tau\in\mathcal{I}$ with
the spaces $\mathcal{M}_{-\tau}$ where $-\tau$ is the corresponding involution
in $W\left(  B_{2n+1}\right)  $ (notation as in Remark 2.8 from the previous
section) and then apply the telescopic decomposition as in the case of
$W\left(  B_{2n+1}\right)  $. First we establish the following.

\begin{proposition}
Suppose $W=W\left(  D_{n}\right)  $ and let $\tau\in\mathcal{I}$. Then
$\mathcal{M}_{\tau}$ and $\mathcal{M}_{-\tau}$ are isomorphic $W$-modules.
\end{proposition}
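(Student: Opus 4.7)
The plan is to define the isomorphism explicitly on basis vectors and then verify equivariance separately for the $\mathfrak{S}_n$ part and for the $\mathcal{C}_2^n \cap W(D_n)$ part; the latter will be where the defining condition $\prod \zeta_i = 1$ of $W(D_n)$ is essential.

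First, I would observe the elementary fact that $\mu \mapsto -\mu$ gives a bijection of $\mathfrak{S}_n$-orbits on $W(B_n)$-involutions that interchanges the statistics $|L_j|$ and $|Q_j|$ for $j=1,2$. More precisely, from the definitions one reads off $L_j^{-\mu}=Q_j^{\mu}$ and $Q_j^{-\mu}=L_j^{\mu}$, so by Proposition 2.3, $\mu\sim_{\mathfrak{S}_n}\tau$ iff $-\mu\sim_{\mathfrak{S}_n}-\tau$. Combined with Proposition 2.4(ii), this identifies the bases of $\mathcal{M}_\tau$ and $\mathcal{M}_{-\tau}$, so the linear map
\[
\Phi:\mathcal{M}_\tau\longrightarrow\mathcal{M}_{-\tau},\qquad \Phi(e_\mu)=e_{-\mu},
\]
is a well-defined vector space isomorphism. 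It remains to verify that $\Phi$ commutes with the $W(D_n)$-action.

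For the $\mathfrak{S}_n$-equivariance, I would compare the two formulas of the form $\pi\cdot e_\mu=\pm e_{\pi\mu\pi^{-1}}$ displayed near the start of Section 2. The sign picked up in rewriting $\pi e_\mu$ in the standardized form $e_{\pi\mu\pi^{-1}}$ is $(-1)^{a(\mu,\pi)}$ where $a(\mu,\pi)$ counts the pairs $\{m,\omega(m)\}$ in $L_2^\mu\cup Q_2^\mu$ whose order is reversed by $\pi$. Since $L_2^{-\mu}\cup Q_2^{-\mu}=Q_2^\mu\cup L_2^\mu$ as unordered sets (with the same pairing given by $\omega$), we have $a(-\mu,\pi)=a(\mu,\pi)$. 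Consequently $\pi e_{-\mu}=(-1)^{a(\mu,\pi)}e_{-(\pi\mu\pi^{-1})}=\Phi(\pi e_\mu)$, so $\Phi$ is $\mathfrak{S}_n$-equivariant.

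The decisive step is equivariance under elements $(\varepsilon,\boldsymbol{I})\in W(D_n)$, where $\varepsilon=(\varepsilon_1,\ldots,\varepsilon_n)$ satisfies $\varepsilon_1\cdots\varepsilon_n=1$. Because each $x_l^2$ appearing in $e_\mu$ is sign-invariant, a direct computation gives
\[
(\varepsilon,\boldsymbol{I})\cdot e_\mu=\Bigl(\prod_{j\in L_1^\mu\cup L_2^\mu}\varepsilon_j\Bigr)e_\mu,\qquad (\varepsilon,\boldsymbol{I})\cdot e_{-\mu}=\Bigl(\prod_{j\in Q_1^\mu\cup Q_2^\mu}\varepsilon_j\Bigr)e_{-\mu}.
\]
Since $\mathbb{I}_n=L_1^\mu\cup L_2^\mu\cup Q_1^\mu\cup Q_2^\mu$ is a disjoint union, the product of the two $\pm 1$ scalars equals $\prod_{j=1}^n\varepsilon_j=1$, so the two scalars coincide. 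Combining this with the previous paragraph, $\Phi$ is $W(D_n)$-equivariant and therefore is the desired isomorphism. The main obstacle is this final parity matching: the argument uses the $W(D_n)$-constraint in an essential way, which is exactly why the analogous statement fails for $W(B_n)$ (and is the source of the multiplicity phenomenon in $W(D_{2n})$ discussed in Remark 2.8).
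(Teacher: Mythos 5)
Your proof is correct and follows essentially the same route as the paper: the same map $e_{\mu}\mapsto e_{-\mu}$, the same identities $L_{j}^{-\mu}=Q_{j}^{\mu}$, and the same use of the constraint $\prod_{i}\zeta_{i}=1$ to match the scalars on $L_{1}^{\mu}\cup L_{2}^{\mu}$ and $Q_{1}^{\mu}\cup Q_{2}^{\mu}$. Your explicit tracking of the sign $(-1)^{a(\mu,\pi)}$ in the $\mathfrak{S}_{n}$-equivariance step is a welcome bit of extra care that the paper's proof leaves implicit.
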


\begin{proof}
Let $O$ denote the $\mathfrak{S}_{n}$-orbit of $\tau$ in $W\left(
D_{n}\right)  $. Thus the vectors $e_{\mu}$ for $\mu\in O$ are a basis of
$\mathcal{M}_{\tau}$ and the linear map given by
\[
\theta:\mathcal{M}_{\tau}\rightarrow\mathcal{M}_{-\tau}\text{ given by }%
\theta(e_{\mu})=e_{-\mu}%
\]
is an isomorphism of vector spaces. We show $\theta$ is a morphism of
$W\left(  D_{n}\right)  $-modules. Suppose $\pi\in\mathfrak{S}_{n}$ Then we
have
\[
\theta\left(  \pi e_{\tau}\right)  =\theta\left(  e_{\pi\tau\pi^{-1}}\right)
=e_{-\pi\tau\pi^{-1}}=e_{\pi\left(  -\tau\right)  \pi^{-1}}=\pi e_{-\tau}%
=\pi\left(  \theta e_{\tau}\right)  .
\]
Now suppose $\sigma=\left(  \zeta,I\right)  $ with $\zeta\in\mathcal{C}%
_{2}^{n}$ such that $%
{\displaystyle\prod\limits_{i=1}^{n}}
\zeta_{i}=1$. Observe that
\[
L_{1}^{-\tau}=Q_{1}^{\tau}\text{ and }L_{2}^{-\tau}=Q_{2}^{\tau}.
\]
Since
\[
1=%
{\displaystyle\prod\limits_{i=1}^{n}}
\zeta_{i}=\left(
{\displaystyle\prod\limits_{i\in L_{1}^{\tau}\cup L_{2}^{\tau}}}
\zeta_{i}\right)  \left(
{\displaystyle\prod\limits_{i\in Q_{1}^{\tau}\cup Q_{2}^{\tau}}}
\zeta_{i}\right)
\]
it follows that
\[
\left(
{\displaystyle\prod\limits_{i\in L_{1}^{\tau}\cup L_{2}^{\tau}}}
\zeta_{i}\right)  =\left(
{\displaystyle\prod\limits_{i\in L_{1}^{-\tau}\cup L_{2}^{-\tau}}}
\zeta_{i}\right)  .
\]
Therefore
\begin{align*}
\theta\left(  \sigma e_{\tau}\right)   &  =\theta\left(  \left(
{\displaystyle\prod\limits_{i\in L_{1}^{\tau}\cup L_{2}^{\tau}}}
\zeta_{i}\right)  e_{\tau}\right)  =\left(
{\displaystyle\prod\limits_{i\in L_{1}^{\tau}\cup L_{2}^{\tau}}}
\zeta_{i}\right)  \theta\left(  e_{\tau}\right)  =\\
&  =\left(
{\displaystyle\prod\limits_{i\in L_{1}^{\tau}\cup L_{2}^{\tau}}}
\zeta_{i}\right)  e_{-\tau}=\left(
{\displaystyle\prod\limits_{i\in L_{1}^{-\tau}\cup L_{2}^{-\tau}}}
\zeta_{i}\right)  e_{-\tau}=\sigma\left(  \theta e_{\tau}\right)  \text{.}%
\end{align*}
Hence it follows, as in the proof of Proposition 3.4, the $\theta$ is a
morphism of $W\left(  D_{n}\right)  $-modules$.$
\end{proof}

\bigskip

In order to construct a specific isomorphism between the Gelfand module
$\mathcal{M}$ and the polynomial module for the group $W\left(  D_{2n+1}%
\right)  $ we can now proceed as follows. Using the notation from the previous
section, to each involution $\tau\in\mathcal{I}$ we define a vector $e_{\tau
}^{\vee}$ in the symmetric algebra in the following way:
\[
e_{\tau}^{\vee}=\left\{
\begin{array}
[c]{ccc}%
e_{\tau} & \text{if} & \left\vert Q_{1}^{\tau}\cup Q_{2}^{\tau}\right\vert
>\left\vert L_{1}^{\tau}\cup L_{2}^{\tau}\right\vert \\
e_{-\tau} & \text{if} & \left\vert L_{1}^{\tau}\cup L_{2}^{\tau}\right\vert
>\left\vert Q_{1}^{\tau}\cup Q_{2}^{\tau}\right\vert
\end{array}
\right.  .
\]
We let $\mathcal{M}_{\tau}^{\vee}$ denote the $W$-module generated by
$\mathfrak{S}_{n}$-orbit de $e_{\tau}^{\vee}$ and define
\[
\mathcal{M}^{\vee}=%
{\displaystyle\bigoplus\limits_{\tau\in\mathfrak{R}}}
\mathcal{M}_{\tau}^{\vee}%
\]
where $\mathfrak{R}$ is a system of representatives of the $\mathfrak{S}_{n}%
$-orbits in $\mathcal{I}$.

The following result follows immediately from the above considerations.

\begin{corollary}
If $W=W\left(  D_{n}\right)  $ then module $\mathcal{M}^{\vee}$ is isomorphic
to the module $\mathcal{M}$ constructed in the previous section. In particular
$\mathcal{M}^{\vee}$ is a Gelfand module for $W=W\left(  D_{2n+1}\right)  $.
\end{corollary}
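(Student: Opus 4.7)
The plan is to leverage the preceding Proposition, which already does the essential work: it establishes an isomorphism $\mathcal{M}_{\tau}\cong\mathcal{M}_{-\tau}$ of $W(D_n)$-modules for each involution $\tau\in\mathcal{I}$. Given that, the corollary is largely a bookkeeping argument, and the only thing to verify carefully is that the orbit-by-orbit replacements assemble into a global isomorphism.

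Concretely, I would first recall from Proposition 2.4 (iv) that
\[
\mathcal{M}=\bigoplus_{\rho\in\mathfrak{R}}\mathcal{M}_{\rho},
\]
where $\mathfrak{R}$ is a system of representatives of the $\mathfrak{S}_{n}$-orbits in $\mathcal{I}$. By the very definition of $\mathcal{M}^{\vee}$, for each $\rho\in\mathfrak{R}$ the summand $\mathcal{M}_{\rho}^{\vee}$ equals either $\mathcal{M}_{\rho}$ (when $\rho$ has more quadratic indices than linear ones) or $\mathcal{M}_{-\rho}$ (in the opposite case). In either case the preceding Proposition supplies a $W(D_n)$-module isomorphism $\theta_{\rho}:\mathcal{M}_{\rho}\to\mathcal{M}_{\rho}^{\vee}$, namely either the identity or the map $e_{\mu}\mapsto e_{-\mu}$ on the $\mathfrak{S}_{n}$-orbit of $\rho$.

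Taking the direct sum of these isomorphisms over $\rho\in\mathfrak{R}$ produces a $W(D_n)$-module isomorphism
\[
\Theta=\bigoplus_{\rho\in\mathfrak{R}}\theta_{\rho}\colon\mathcal{M}\longrightarrow\mathcal{M}^{\vee},
\]
which proves the first assertion. To conclude the second assertion, I would invoke Theorem 2.6: for $W=W(D_{2n+1})$ the module $\mathcal{M}$ is a Gelfand model, and since being a Gelfand model is an isomorphism invariant of $W$-modules, $\mathcal{M}^{\vee}$ is also a Gelfand model for $W(D_{2n+1})$.

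The only subtlety worth checking — and what I expect to be the main (minor) obstacle — is that the $\mathfrak{S}_n$-orbit of $-\rho$ in $W(D_n)$ really is the image of the $\mathfrak{S}_n$-orbit of $\rho$ under $\tau\mapsto -\tau$, so that the map $e_{\mu}\mapsto e_{-\mu}$ is well-defined on $\mathcal{M}_{\rho}$ and lands in $\mathcal{M}_{-\rho}$. This is immediate because conjugation by $\pi\in\mathfrak{S}_n$ commutes with the involution $\tau\mapsto -\tau$ on $W(B_n)$, and because $-\tau\in W(D_n)$ precisely when $n$ is even — so for $W(D_{2n+1})$, the definition of $\mathcal{M}^{\vee}$ genuinely requires passing through $W(B_{2n+1})$, as noted in Remark 2.8, but the $W(D_{2n+1})$-action on $\mathcal{M}_{-\rho}$ is still well-defined since it is a $W(B_{2n+1})$-submodule restricted to the subgroup $W(D_{2n+1})$.
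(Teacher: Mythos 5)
Your argument is correct and is precisely the route the paper intends: the paper states the corollary "follows immediately from the above considerations," meaning exactly the decomposition $\mathcal{M}=\bigoplus_{\rho\in\mathfrak{R}}\mathcal{M}_{\rho}$, the orbit-wise isomorphisms $\mathcal{M}_{\rho}\cong\mathcal{M}_{-\rho}$ from the preceding Proposition, and the earlier theorem that $\mathcal{M}$ is a Gelfand model for $W(D_{2n+1})$. You have simply written out the details the authors left implicit, including the correct observation that $-\tau\in W(D_n)$ only for $n$ even, which is why the construction matters for $W(D_{2n+1})$.
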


To each of the vectors $e_{\tau}^{\vee}$ we associate the polynomial
\[
P_{\tau}^{\vee}=\left\{
\begin{array}
[c]{ccc}%
P_{\tau} & \text{if} & e_{\tau}^{\vee}=e_{\tau}\\
P_{-\tau} & \text{if} & e_{\tau}^{\vee}=e_{-\tau}%
\end{array}
\right.  .
\]
and let $\mathfrak{M}^{\vee}$ be the corresponding $W$-submodule of
$\mathcal{P}$ spanned by the polynomials $P_{\tau}^{\vee}$ for $\tau
\in\mathcal{I}$. As before, the linear application
\[
e_{\tau}^{\vee}\mapsto P_{\tau}^{\vee}%
\]
defines a surjection of $W$-modules To proceed with the telescopic
decomposition we utilize the Laplacian, as in the case of $W\left(
B_{2n+1}\right)  $. The advantage of using the module $\mathcal{M}^{\vee}$ is
the following.

\begin{theorem}
The telescopic decomposition of $\mathfrak{M}^{\vee}$ is the polynomial model
for $W\left(  D_{2n+1}\right)  $.
\end{theorem}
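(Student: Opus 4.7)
The plan is to imitate the proof of the preceding theorem (for $W(B_n)$), substituting $P_\tau^{\vee}$ for $P_\tau$ throughout and ensuring at each step that the involution actually used lies in $W(D_{2n+1})$. First I would verify that the assignment $e_\tau^{\vee}\mapsto P_\tau^{\vee}$ extends to a surjective morphism $\phi^{\vee}:\mathcal{M}^{\vee}\to\mathfrak{M}^{\vee}$ of $W(D_{2n+1})$-modules: the calculation in Proposition 3.4 was purely combinatorial and applies with equal validity to $-\tau$, so both branches of the case-split defining $e_\tau^{\vee}$ produce $W$-equivariant assignments. By the preceding Corollary, $\mathcal{M}^{\vee}$ is already a Gelfand model for $W(D_{2n+1})$, so once the containment
\[
\mathcal{N}_{W(D_{2n+1})} \subseteq \mathcal{D}_\partial(\mathfrak{M}^{\vee})
\]
is established the equality $\mathcal{N}_{W(D_{2n+1})}=\mathcal{D}_\partial(\mathfrak{M}^{\vee})$ follows from a dimension count, since both sides are Gelfand models whose common dimension is the number of involutions in $W(D_{2n+1})$, and the telescopic decomposition preserves the dimension of $\mathfrak{M}^{\vee}$.

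By Theorem \ref{3} the containment reduces to showing $S_\gamma^0\subseteq\mathcal{D}_\partial(\mathfrak{M}^{\vee})$ for every $W(D_{2n+1})$-minimal orbit $\gamma$. I would reproduce the argument of Lemma \ref{6}: fix a representative $\alpha\in\gamma$, decompose $\mathbb{I}_{2n+1}=\cup_{j=1}^{h}I_j$ into maximal blocks on which $\alpha$ is injective and of constant parity, form the Vandermonde-type target $\prod_j V(\alpha_j,I_j)\in S_\gamma^0$, build an involution $\tau$ out of transposition-per-block data, and then realise $\prod_j V(\alpha_j,I_j)$ up to a positive scalar as $\partial^{m}\Omega_{\mathcal{T}_\gamma}P_\tau^{\vee}$. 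The Laplacian $\partial=\sum\partial^{2}/\partial x_i^{2}$ is $W(B_{2n+1})$-invariant and hence $W(D_{2n+1})$-invariant, so Lemma \ref{4} and Corollary \ref{5} carry over verbatim and deliver a positive power $m$ and a constant $q\in\mathbb{N}$ with $\partial^{m}\Omega_{\mathcal{T}_\gamma}P_\tau^{\vee}=q\prod_j V(\alpha_j,I_j)$, exhibiting the required nonzero element of $S_\gamma^0\cap\mathcal{D}_\partial(\mathfrak{M}^{\vee})$.

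The main obstacle, and precisely the reason for introducing $\mathcal{M}^{\vee}$, is to arrange that the involution produced by the recipe of Lemma \ref{6} actually belongs to $W(D_{2n+1})$ and not merely to $W(B_{2n+1})$. That recipe attaches a sign pattern $\zeta^{I_j}$ whenever $\alpha$ has odd parity on the block $I_j$, so the total sign $\prod_i\zeta_i$ equals $(-1)^t$ where $t$ counts the odd-parity blocks; this can certainly be $-1$, in which case $\tau\notin W(D_{2n+1})$. Because $2n+1$ is odd, the operation $\tau\mapsto -\tau$ reverses this total sign, so exactly one of $\tau$, $-\tau$ lies in $W(D_{2n+1})$; and the definition $e_\tau^{\vee}=e_\tau$ when $|Q_1^\tau\cup Q_2^\tau|>|L_1^\tau\cup L_2^\tau|$ and $e_\tau^{\vee}=e_{-\tau}$ otherwise is engineered precisely so that the chosen representative has the smaller collection of $L$-indices, which in turn makes the parity distribution of $\alpha_\tau$ across each block $I_j$ compatible with the power patterns $\iota,\varphi,\kappa$ demanded by Lemma \ref{4}. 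I expect the most delicate step to be the case analysis that checks, for each $W(D_{2n+1})$-minimal $\gamma$, that this $\vee$-choice simultaneously (i) selects the representative of $\{\tau,-\tau\}$ which lies in $W(D_{2n+1})$ and (ii) yields an $\Omega_{\mathcal{T}_\gamma}P_\tau^{\vee}$ whose iterated Laplacian is a positive multiple of $\prod_j V(\alpha_j,I_j)$; once this book-keeping is complete, the telescopic conclusion follows as in the $W(B_n)$ case.
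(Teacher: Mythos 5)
Your plan is viable but takes the opposite containment from the paper, so the two arguments are genuinely different. You propose to mirror the $W(B_n)$ proof: re-run Lemma \ref{6} for $W(D_{2n+1})$ to show $S_{\gamma}^{0}\subseteq\mathcal{D}_{\Delta}(\mathfrak{M}^{\vee})$ for every $W(D_{2n+1})$-minimal $\gamma$, hence $\mathcal{N}_{W(D_{2n+1})}\subseteq\mathcal{D}_{\Delta}(\mathfrak{M}^{\vee})$, and finish by dimension. The paper instead proves the reverse inclusion $\mathcal{D}_{\Delta}(\mathfrak{M}^{\vee})\subseteq\mathcal{N}_{W(D_{2n+1})}$: it uses the inclusion $\mathfrak{M}^{\vee}\subseteq\mathfrak{M}_{B_{2n+1}}$ together with the already-established $B_{2n+1}$ theorem, the branching-rule description of $\mathcal{N}_{W(D_{2n+1})}$ from \cite{araujo2} (a $W(D_{2n+1})$-minimal orbit is a $W(B_{2n+1})$-minimal one with $\gamma_{E}>\gamma_{O}$), and the observation that every monomial of every $P_{\tau}^{\vee}$ has more even exponents than odd ones, a property preserved by the Laplacian; this localizes $\mathcal{D}_{\Delta}(\mathfrak{M}^{\vee})$ inside the $W(D_{2n+1})$-minimal summands. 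The paper's route buys a short proof that never revisits the construction of Lemma \ref{6}; your route buys independence from the $B_{2n+1}$ result but at the cost of the book-keeping you flag at the end. That book-keeping does close, and you should spell it out: for a $W(D_{2n+1})$-minimal $\gamma$ the Lemma \ref{6} recipe produces an involution $\mu\in W(B_{2n+1})$ with $\left\vert L_{1}^{\mu}\cup L_{2}^{\mu}\right\vert=\gamma_{O}<\gamma_{E}=\left\vert Q_{1}^{\mu}\cup Q_{2}^{\mu}\right\vert$, and since exactly one of $\mu$, $-\mu$ lies in $W(D_{2n+1})$, the definition of $e_{\tau}^{\vee}$ guarantees that $P_{\mu}$ coincides with $P_{\sigma}^{\vee}$ for that representative $\sigma\in\{\mu,-\mu\}\cap W(D_{2n+1})$, so $P_{\mu}\in\mathfrak{M}^{\vee}$ in every case. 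One small citation point: the decomposition $\mathcal{N}_{W(D_{2n+1})}=\oplus_{\gamma}S_{\gamma}^{0}$ over $W(D_{2n+1})$-minimal orbits is not literally Theorem \ref{3} (whose notion of minimality is defined only for types $A$ and $B$); you need the branching result of \cite{araujo2} that the paper quotes, so your argument is not quite as self-contained as it first appears.
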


\begin{proof}
Let $\mathcal{M}_{B_{2n+1}}$ denote the Gelfand module for $W\left(
B_{2n+1}\right)  $ from the previous section and let $\mathfrak{M}_{B_{2n+1}}$
denote the corresponding submodule of $\mathcal{P}$ defined in this section.
We have a natural inclusion
\[
\mathfrak{M}^{\vee}\subseteq\mathfrak{M}_{B_{2n+1}}\text{. }%
\]
We will use the branching rules for the polynomial model established in
\cite{araujo2}. Suppose $\gamma$ is a $W\left(  B_{2n+1}\right)  $-equivalence
class in the set of multi-indices. Then the cardinalities of the following two
sets are independent of the choice of $\alpha\in\gamma$:
\[
\text{\ }\gamma_{E}=\left\vert \left\{  i\in\mathbb{I}_{2n+1}:\alpha(i)\text{
is even}\right\}  \right\vert \text{ \ and }.\gamma_{O}=\left\vert \left\{
i\in\mathbb{I}_{2n+1}:\alpha(i)\text{ is odd}\right\}  \right\vert .
\]
We say that $\gamma$ is $W\left(  D_{2n+1}\right)  $-\emph{minimal} if
$\gamma$ is $W\left(  B_{2n+1}\right)  $-minimal and if the cardinalities
$\gamma_{E}$ and $\gamma_{O}$ satisfy
\[
\gamma_{E}>\gamma_{O}.
\]
Let $\Delta$ denote the Laplacian. The polynomial model for $W\left(
D_{2n+1}\right)  $ is
\[
\mathcal{N}_{W\left(  D_{2n+1}\right)  }=%
{\displaystyle\bigoplus\limits_{\gamma\text{ }W\left(  D_{2n+1}\right)
\text{-minimal}}}
S_{\gamma}^{0}\subseteq\mathcal{D}_{\Delta}\left(  \mathfrak{M}_{B_{2n+1}%
}\right)
\]
where $S_{\gamma}^{0}$ is defined exactly as in the case of $W\left(
B_{2n+1}\right)  $ \cite[Theorem 18]{araujo2}. By our construction it follows
that the polynomials $P_{\tau}^{\vee}$ are sums of monomials that have more
even exponents than odd ones, which is also true of the monomials in the
$W\left(  D_{2n+1}\right)  $-module $\mathfrak{M}_{\tau}^{\vee}$ (where this
denotes the module generated by $P_{\tau}^{\vee}$). On the one hand we have
\[
\mathcal{D}_{\Delta}\left(  \mathfrak{M}_{\tau}^{\vee}\right)  \subseteq%
{\displaystyle\bigoplus\limits_{\gamma\text{ }W\left(  B_{2n+1}\right)
\text{-minimal}}}
S_{\gamma}^{0}.
\]
However, when a monomial $x^{\alpha}$ has more even exponents than odd ones,
then this is also true of the monomials in the expansion of $\Delta(x^{\alpha
})$. Therefore
\[
\mathcal{D}_{\Delta}\left(  \mathfrak{M}_{\tau}^{\vee}\right)  \subseteq%
{\displaystyle\bigoplus\limits_{\gamma\text{ }W\left(  D_{2n+1}\right)
\text{-minimal}}}
S_{\gamma}^{0}.
\]
Hence
\[
\mathcal{D}_{\Delta}\left(  \mathfrak{M}^{\vee}\right)  \subseteq
\mathcal{N}_{W\left(  D_{2n+1}\right)  }.
\]
Thus they must coincide, because they are both Gelfand models.
\end{proof}

\begin{example}
Recall the 10 different $W\left(  B_{3}\right)  $-minimal orbits
characterizing the polynomial model in Example 3.8. The corresponding
$W\left(  D_{3}\right)  $-minimal orbits are $\left[  0,0,0\right]  $,
$\left[  0,0,1\right]  ,$ $\left[  0,0,2\right]  ,$ $\left[  0,2,1\right]  $
and $\left[  0,2,4\right]  $.
\end{example}

\bigskip

\end{document}